\newtheorem{lemma}{Lemma}
\newtheorem{theorem}{Theorem}
\newtheorem{proposition}{Proposition}
\newtheorem{corollary}{Corollary}
\theoremstyle{definition}
\newtheorem{definition}{Definition}
\theoremstyle{remark}
\newtheorem*{remark}{Remark}
\let\originalleft\left
\let\originalright\right
\renewcommand{\left}{\mathopen{}\mathclose\bgroup\originalleft}
\renewcommand{\right}{\aftergroup\egroup\originalright}
\renewcommand*\env@matrix[1][\arraystretch]{%
  \edef\arraystretch{#1}%
  \hskip -\arraycolsep
  \let\@ifnextchar\new@ifnextchar
  \array{*\c@MaxMatrixCols c}}
\newcommand{\be}{\begin{equation}}
\newcommand{\ee}{\end{equation}}
\newcommand{\quadscript}[5]{{}_{#4}^{#2} {#1}_{#5}^{#3}}
\newcommand{\fracint}[3]{\quadscript{I}{}{#1}{#2}{#3}}
\newcommand{\rlderiv}[3]{\quadscript{D}{RL}{#1}{#2}{#3}}
\newcommand{\cderiv}[3]{\quadscript{D}{C}{#1}{#2}{#3}}
\newcommand{\fraciint}[2]{\quadscript{\mathcal{I}}{}{#1}{}{#2}}
\newcommand{\rlderiiv}[2]{\quadscript{\mathcal{D}}{RL}{#1}{}{#2}}
\newcommand{\dfraciint}[2]{\quadscript{\mathbb{I}}{}{#1}{}{#2}}
\newcommand{\drlderiiv}[2]{\quadscript{\mathbb{D}}{RL}{#1}{}{#2}}
\newcommand{\cnabla}[1]{\quadscript{\nabla}{C}{#1}{}{}}
\newcommand{\reals}{\mathbb{R}}
\newcommand{\R}{\reals}
\newcommand{\Z}{\mathbb{Z}}
\begin{document}
\begin{frontmatter}



\title{Structure-Preserving Discretization of Fractional Vector Calculus using Discrete Exterior Calculus}
\author[label1]{Alon Jacobson}
\ead{Alon.Jacobson@tufts.edu}
\author[label1]{Xiaozhe Hu\corref{cor1}}
\ead{Xiaozhe.Hu@tufts.edu}
\affiliation[label1]{organization={Department of Mathematics},
			addressline={Tufts University},
			city={Medford},
			postcode={02155},
			state={MA},
			country={USA}}
\cortext[cor1]{Corresponding author}


\begin{abstract}
	Fractional vector calculus is the building block of the fractional partial differential equations that model non-local or long-range phenomena, e.g., anomalous diffusion, fractional electromagnetism, and fractional advection-dispersion. In this work, we reformulate a type of fractional vector calculus that uses Caputo fractional partial derivatives and discretize this reformulation using discrete exterior calculus on a cubical complex in the structure-preserving way, meaning that the continuous-level properties $\operatorname{curl}^\alpha \operatorname{grad}^\alpha = \bm{0}$ and $\operatorname{div}^\alpha  \operatorname{curl}^\alpha = 0$ hold exactly on the discrete level. We discuss important properties of our fractional discrete exterior derivatives and verify their second-order convergence in the root mean square error numerically. Our proposed discretization has the potential to provide accurate and stable numerical solutions to fractional partial differential equations and exactly preserve fundamental physics laws on the discrete level regardless of the mesh size.
\end{abstract}



\begin{keyword}
Fractional vector calculus \sep Discrete exterior calculus \sep Structure-preserving discretization \sep Fractional partial differential equations

\MSC[2020] 65M99 \sep 65N99 \sep 26A33 \sep 35R11
\end{keyword}

\end{frontmatter}

\section{Introduction}
Fractional calculus generalizes the integer order integration and differentiation to non-integer order. Unlike standard derivatives and integrals, fractional derivatives and integrals are non-local operators, enabling them to model long-range dependence. In this work, we focus on fractional vector calculus (FVC), which analogously extends the vector calculus to fractional order. Fractional calculus and FVC are widely used in fractional partial differential equations (FPDEs), which recently have a wide range of new scientific and engineering applications. For example, fractional diffusion equations model anomalous diffusion \cite{metzler1994fractional,SokolovAnomalousDiffusion,dos2019analytic,oliveira2019anomalous,evangelista2018fractional}, fractional Maxwell's equations generalize Maxwell's equations to fractional order \cite{TARASOV20082756,baleanu2009fractional,ortigueira2015fractionalMaxwell}, fractional advection-dispersion equations describe subsurface transport \cite{benson2000application,pang2019fpinns,d2014multidimensional,meerschaert2006fractional}, fractional Laplacians are used in image processing \cite{gatto2015numerical}, fractional differential equations are used in finance \cite{scalas2000fractional}, and a fractional gradient has been used for fractional backpropagation in training neural networks \cite{wei2020generalization}.

There are various definitions of FVC, each with their own strengths and weaknesses. Many approaches use a fractional partial derivative in each coordinate direction to construct a fractional nabla operator. Other approaches use an anisotropic mixture of fractional directional derivatives in each direction via an integral, while still other approaches use an isotropic mixture of function values throughout Euclidean space to define the operators.

Due to the complexity of FPDEs, the solutions cannot usually be computed symbolically, so numerical approximations are essential for solving them. Various finite-element and finite-difference methods have been developed for the discretization of FVC to be used in solving these FPDEs, with techniques including finite-difference methods \cite{wang2010direct}, the discretization of fractional directional derivatives \cite{meerschaert2004vector,PANG2013597,pang2019fpinns}, spectral decompositions \cite{song2017computing} and physics-informed neural networks \cite{pang2019fpinns}.

When solving PDEs and FPDEs numerically, some consideration must be given to computational efficiency (i.e., how much time or computer memory is required), as well as the accuracy of the solution obtained (i.e., how close the numerical approximation is to the true solution). Another property that is often desirable is to have chosen continuous-level properties of the model be satisfied \emph{exactly} in its discretization. Such discretizations are termed \emph{structure-preserving}. One possible structure to preserve is to preserve the de Rham exact sequence, which essentially means preserving the vector calculus identities $\operatorname{curl} \operatorname{grad} f = \bm{0}$ and $\operatorname{div} \operatorname{curl} \bm{F} = 0$ exactly in the discretization. The de Rham exact sequence plays an important role in many physical laws, such as incompressibility and Gauss's law of magnetism.

One way to preserve this de Rham exact sequence is by using discrete exterior calculus (DEC). DEC is a computational toolkit that creates discrete operators and definitions that are analogous to the corresponding operators from multivariate calculus. It has recently been gaining popularity as a tool for developing numerical methods for solving PDEs in computational simulations, such as mechanics problems \cite{leok2004foundations}, Lie advection \cite{mullen2011discrete}, and computational fluid dynamics \cite{elcott2007stable}. In addition to being used as a structure-preserving finite-element method, DEC is also widely used in other areas such as computer graphics applications \cite{dominitz2009texture} and geometry processing applications \cite{hormann2007mesh}.

In DEC, the \emph{discrete exterior derivative operator}, $\mathbb{D}_p$, is the discrete version of grad, curl, and div for $p=0$, $p=1$, and $p=2$, respectively. $\mathbb{D}_p$ is a $n_{p+1} \times n_p$ matrix, where $n_p$ is the number of $p$-cells in the complex (for more detail, see Section~\ref{prelim:DEC}). DEC preserves the de Rham exact sequence because the discrete exterior derivative operators satisfy $\mathbb{D}_{p+1} \mathbb{D}_p = 0$ for $p \geq 0$, which is the discrete version of $\operatorname{curl} \operatorname{grad} f = \bm{0}$ and $\operatorname{div} \operatorname{curl} \bm{F} = 0$ for $p=0$ and $p=1$, respectively.

Many types of fractional vector calculus possess an analogous exact sequence $\operatorname{curl}^\alpha \operatorname{grad}^\alpha f = \bm{0}$ and $\operatorname{div}^\alpha \operatorname{curl}^\alpha \bm{F} = 0$. However, to the best of our knowledge, no discretization of FPDEs or FVC preserves this exact sequence. Additionally, despite the usefulness of DEC and the applicability of fractional calculus and fractional vector calculus, there is rarely any work on formulating a \emph{fractional discrete exterior calculus} (FDEC), which generalizes DEC to a fractional order.

To the best of our knowledge, the only existing work on FDEC is \cite{CRUM201964}, which considered the following ``two-sided'' fractional Caputo derivative of order $\alpha \in (0,1)$ of a function $f \in C^1[a,b]$ in 1D:
\begin{equation} \label{def:2-side-Caputo-Derivative}
	D^\alpha f(x) := \frac{1}{\Gamma(1-\alpha)} \int_{[a,b] \setminus \{x\}}
	\frac{f'(\tau)}{|x-\tau|^{\alpha}} \, d\tau.
\end{equation}
\cite{CRUM201964} then defined a \emph{fractional discrete exterior derivative} by discretizing \eqref{def:2-side-Caputo-Derivative}, and then generalizing the resultant discrete operator to higher dimensions. This results in the following $n_{p+1} \times n_p$ matrix, 
\[
\mathbb{D}_p^\alpha = W_{p+1}^{1-\alpha} \mathbb{D}_p,
\]
where $W_{p+1}^{1-\alpha} \in \mathbb{R}^{n_{p+1} \times n_{p+1}}$ is a straightforward generalization of the discretization of the fractional integration of order $1-\alpha$ present in \eqref{def:2-side-Caputo-Derivative} to higher dimensions. ($\mathbb{D}_0^\alpha$ is a discretization of \eqref{def:2-side-Caputo-Derivative} when the complex is one-dimensional.) Unfortunately, the FDEC introduced in \cite{CRUM201964} does not satisfy the fractional generalization of the property $\mathbb{D}_{p+1} \mathbb{D}_p = 0$, i.e., $\mathbb{D}_{p+1}^\alpha \mathbb{D}_p^\alpha \neq 0$. Therefore, even if it \emph{is} a discretization of some type of FVC, it cannot possibly be a discretization that preserves a fractional de Rham exact sequence. Furthermore, in \cite{CRUM201964}, since the FDEC in the higher dimensional case is obtained directly from the 1D case, it is not clear whether it is indeed a discretization of any FVC anymore in higher dimensions, which may limit its potential in numerical simulations involving FPDEs in higher dimensions. Indeed, while numerical experiments verified the expected result that $\mathbb{D}_0^\alpha$ converges to $D^{\alpha}$ in one dimension, convergence in higher dimensions was not shown; although the authors compared $\mathbb{D}_0^\alpha$ to a 2-sided Caputo gradient field of a scalar-valued function in 2-d, convergence with decreasing mesh size was not shown in this case. This is understandable, since due to their definition of the fractional discrete exterior derivative, one would not expect that $\mathbb{D}_0^\alpha$ should converge to this fractional gradient field.

\subsection{Contributions}
Our goal in this work is to define an FDEC that does not suffer the abovementioned problems. Namely, we want our FDEC operators to (1) be direct discretizations of a type of FVC that possesses the exact sequence $\operatorname{curl}^\alpha \operatorname{grad}^\alpha = \bm{0}$ and $\operatorname{div}^\alpha \operatorname{curl}^\alpha = 0$, and (2) be structure-preserving, by having the corresponding exact sequence $\mathbb{D}_{p+1}^\alpha \, \mathbb{D}_p^\alpha = 0$. Both of these properties are achieved by rewriting the operators from a type of FVC that does have the exact sequence (namely, one defined by a fractional nabla operator) 
as compositions of fractional integration and exterior derivatives on the continuous level, and then discretizing these composite operators using DEC on a regular cubical complex. This results in the following fractional discrete exterior derivative operators,
\[ \mathbb{D}_p^\alpha = \dfraciint{1-\alpha}{p+1} \, \mathbb{D}_p \, (\dfraciint{1-\alpha}{p})^{-1}, \qquad p=0,1,2, \quad 0<\alpha<1, \]
where the matrix $\dfraciint{1-\alpha}{p}$ is a discretization of $p$-dimensional fractional integration of order $1-\alpha$ on the $p$-cells.

By discretizing a type of FVC, these operators can be implemented in software and open doors for numerically solving FPDEs. Furthermore, since our approach is structure-preserving -- satisfying the continuous-level properties $\operatorname{curl}^\alpha \operatorname{grad}^\alpha = \bm{0}$ and $\operatorname{div}^\alpha \operatorname{curl}^\alpha = 0$ exactly on the discrete level -- our proposed FDEC operators have high accuracy in discretizing the corresponding continuous operators and behave similarly even at coarse mesh sizes, and can potentially increase the fidelity of numerical solutions to FPDEs. In addition, unlike the usual dense matrices obtained from discretizing the fractional derivatives, the matrices involved in our discretization are relatively sparse, which enables fast computations. Finally, since these operators are extensions of DEC, they can provide fractional generalizations of applications that use DEC.

\subsection{Outline}
An outline of the paper is as follows. In Section~\ref{sec:preliminaries}, we recall fractional calculus, fractional vector calculus, and discrete exterior calculus. In Section~\ref{sec:FDEC}, we present and prove our reformulation of Tarasov's FVC and describe its discretization. In Section~\ref{sec:numerics}, we show the convergence of our FDEC to Tarasov's FVC numerically, and finally we summarize our work and suggest possible future work in Section~\ref{sec:conclusion}.

\section{Preliminaries}\label{sec:preliminaries}
In this section, we briefly recall fractional calculus first, and then introduce the fractional vector calculus and discrete exterior calculus, which are the building blocks of our FDEC.

\subsection{Fractional calculus} \label{prelim:FC}
First, we discuss fractional calculus. There are many different definitions. This paper focuses on the Riemann-Liouville fractional integral and the Caputo and Riemann-Liouville fractional derivatives. Rather than defining the usual fractional integrals and derivatives, below we define ``partial'' fractional integrals and derivatives of a scalar-valued function of multiple variables, $f : \Omega \to \R$, where $\Omega = [x^1_{\min}, x^1_{\max}] \times \cdots \times [x^m_{\min}, x^m_{\max}] \subset \mathbb{R}^m$. This generalizes the usual definitions, in the sense that if $m=1$, then these definitions reduce to the usual one-dimensional definitions. Furthermore, definitions of partial fractional integrals and derivatives that are essentially the same as the ones we will define can be found in \cite{kilbas2006theory}.
\subsubsection{Fractional integrals and derivatives}
For a real number $\alpha > 0$ and a real-valued function $g : [a,b] \to \R$, the left-sided Riemann-Liouville fractional integral of order $\alpha$ is defined as follows:
\[
\fracint{\alpha}{a}{x}[x'] g(x') = \frac{1}{\Gamma(\alpha)} \int_a^x \frac{dx'}{(x-x')^{1-\alpha}} g(x') \quad (a \leq x \leq b)
\]
where $\Gamma(\cdot)$ denotes the gamma function. Similarly, the left-sided Riemann-Liouville partial fractional integral with respect to coordinate $x^j$ from $a$ to $b$ of order $\alpha$ of a function $f : \Omega \to \R$ is defined as follows:
\begin{align*}
&\fracint{\alpha}{a}{b,x^j}[x'] f := \fracint{\alpha}{a}{b}[x'] f(x^1,\dots,x^{j-1},x',x^{j+1},\dots,x^m) \\
& \quad = \frac{1}{\Gamma(\alpha)} \int_a^{b} \frac{f(x^1,\dots,x^{j-1},x',x^{j+1},\dots,x^m)}{(b-x')^{1-\alpha}} \, dx', \quad (x_{\min}^j \leq a \leq b \leq x_{\max}^j).
\end{align*}
Also, we define
\[
\fracint{\alpha}{}{x^j} f(x^1,\dots,x^m) := \fracint{\alpha}{x_{\min}^j}{x^j,x^j}[x'] f.
\]
Next, we recall the left-sided Riemann-Liouville derivative and the left-sided Caputo derivative. If we let $D^n_{x^j}$ denote the $n$th partial derivative with respect to coordinate $x^j$ (we drop the superscript when $n=1$), then the left-sided Riemann-Liouville fractional partial derivative of $f$ with respect to coordinate $x^j$ at a point $(x^1,\dots,x^m)$ of order $\alpha \geq 0$ is defined as, for $n = \lfloor \alpha \rfloor + 1$,
\begin{align*}
	\rlderiv{\alpha}{}{x^j}f := D^n_{x^j} \fracint{n-\alpha}{}{x^j} f = \frac{1}{\Gamma(n-\alpha)} \left(\frac{\partial}{\partial x^j}\right)^n \int_{x^j_{\min}}^{x^j} \frac{f(x^1,\dots,x^{j-1},x',x^{j+1},\dots,x^m)}{(x^j-x')^{1-(n-\alpha)}} \, dx'.
\end{align*}
Naturally,  $\rlderiv{k}{}{x^j}f = D^k_{x^j} f$ if $k \in \mathbb{N}_0 := \{0,1,2,\dots\}$.

Similarly, for $0 < \alpha \notin \mathbb{N}_0$, the left-sided Caputo fractional partial derivative with respect to coordinate $x^j$ of order $\alpha$ is defined as, for $n = \lfloor \alpha \rfloor + 1$,
\begin{align*}
\cderiv{\alpha}{}{x^j}f := \fracint{n-\alpha}{}{x^j} D^n_{x^j}f = \frac{1}{\Gamma(n-\alpha)} \int_{x_{\min}^j}^{x^j} \frac{D^n_{x^j}f(x^1,\dots,x^{j-1},x',x^{j+1},\dots,x^m)}{(x^j-x')^{1-(n-\alpha)}} \, dx'.
\end{align*}
Otherwise, for integer orders, we define $\cderiv{k}{}{x^j}f := D^k_{x^j} f$ for $\alpha = k \in \mathbb{N}_0$.

\subsubsection{Fractional calculus identities}
Here we will present some identities involving the fractional derivatives and integrals defined above that will be used in this work. The first such identity is named the ``fundamental theorem of fractional calculus'' (FTFC) by \cite{TARASOV20082756}, which generalizes the fundamental theorem of calculus to fractional order. Only the first part of the FTFC will be presented, since the other part is not necessary for the results of this paper.

The first part of the FTFC states that both the Caputo and Riemann-Liouville derivatives are left inverse operators of the Riemann-Liouville integration operator from the left. This generalizes the well-known formula $\frac{d}{dx}\int_a^x f(t) \, dt = f(x)$.
\begin{lemma}\label{lem:FTFC}
Let $f : \Omega \subset \R^m \to \R$ be continuous and let $\alpha > 0$. Then at any point $(x^1,\dots,x^m) \in \Omega$ and any $j=1,\dots,m$,
\[
\rlderiv{\alpha}{}{x^j} \fracint{\alpha}{}{x^j} f = f \quad \text{and} \quad
\cderiv{\alpha}{}{x^j} \fracint{\alpha}{}{x^j} f = f.
\]
\end{lemma}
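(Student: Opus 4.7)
The plan is to reduce the multivariate identities to their one-dimensional counterparts and then lean on the semigroup property of Riemann-Liouville integration together with the classical fundamental theorem of calculus. Since each of $\fracint{\alpha}{}{x^j}$, $\rlderiv{\alpha}{}{x^j}$, and $\cderiv{\alpha}{}{x^j}$ treats the coordinates $x^i$ with $i \neq j$ as mere parameters, fixing them and setting $g(x^j) := f(x^1,\dots,x^m)$ reduces both identities to 1D statements about the continuous function $g$ on $[x^j_{\min}, x^j_{\max}]$.

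The key preliminary ingredient is the semigroup identity $\fracint{\beta}{a}{x}\fracint{\gamma}{a}{x} g = \fracint{\beta+\gamma}{a}{x} g$ for $\beta, \gamma > 0$, which I would derive by applying Fubini's theorem to the iterated integral and then invoking the Beta-function identity $\int_s^x (x-\tau)^{\beta-1}(\tau-s)^{\gamma-1}\,d\tau = \frac{\Gamma(\beta)\Gamma(\gamma)}{\Gamma(\beta+\gamma)}(x-s)^{\beta+\gamma-1}$ on the inner integral. Setting $n = \lfloor \alpha \rfloor + 1$, the Riemann-Liouville case is then immediate:
\[
\rlderiv{\alpha}{}{x^j}\fracint{\alpha}{}{x^j} f = D^n_{x^j}\fracint{n-\alpha}{}{x^j}\fracint{\alpha}{}{x^j} f = D^n_{x^j}\fracint{n}{}{x^j} f = f,
\]
where the last step is $n$ applications of the classical fundamental theorem of calculus to $\fracint{n}{}{x^j} f$, which is an ordinary $n$-fold iterated integral of $f$ starting at $x^j_{\min}$.

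The Caputo identity is more delicate because the derivative sits inside the integral. Writing $h := \fracint{\alpha}{}{x^j} f$, I would first show that $\partial_{x^j}^{k} h \bigr|_{x^j = x^j_{\min}} = 0$ for $k = 0, \dots, n-1$: continuity of $f$ together with the semigroup property yields the estimate $h(x) = O\bigl((x-x^j_{\min})^\alpha\bigr)$ near the left endpoint, and since $\alpha > n-1$ by choice of $n$, all derivatives up to order $n-1$ also vanish there. With these boundary values zero, the standard relation between the Riemann-Liouville and Caputo derivatives (obtained by integrating by parts $n$ times in the definition of $\cderiv{\alpha}{}{x^j}$) collapses to $\cderiv{\alpha}{}{x^j} h = \rlderiv{\alpha}{}{x^j} h$, reducing the Caputo identity to the Riemann-Liouville one just proved.

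The main obstacle will be the Caputo step: rigorously justifying that $D^n_{x^j} h$ exists in the classical sense and that every boundary term produced by the integration-by-parts comparison between $\cderiv{\alpha}{}{x^j}$ and $\rlderiv{\alpha}{}{x^j}$ vanishes at $x^j_{\min}$. Both hinge on the endpoint regularity theory of Riemann-Liouville integrals of continuous functions, a standard but nontrivial input that can be imported from \cite{kilbas2006theory}; under mild additional smoothness of $f$, this gets absorbed without difficulty.
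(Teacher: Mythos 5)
Your proof is correct in its overall architecture but takes a genuinely different route from the paper: the paper's entire proof is a one-line reduction (``$f$ continuous implies continuous in each variable separately'') followed by citations to Lemmas 2.4, 2.9(b), and 2.21(a) of Kilbas--Srivastava--Trujillo, whereas you reconstruct the 1D results from scratch. Your Riemann--Liouville half is complete and clean: the semigroup identity via Fubini and the Beta function, followed by $D^n\, \fracint{n}{}{x^j} f = f$ from the classical fundamental theorem of calculus, is exactly the content of the cited Lemma 2.4. What your self-contained argument buys is transparency about where each hypothesis is used; what the citation buys the paper is coverage of the delicate endpoint regularity in one stroke.

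The one substantive issue is the Caputo half, and you have correctly identified it yourself. With the paper's literal definition $\cderiv{\alpha}{}{x^j} := \fracint{n-\alpha}{}{x^j} D^n_{x^j}$, one must know that $D^n_{x^j} h$ exists classically for $h = \fracint{\alpha}{}{x^j} f$, and for $f$ merely continuous this can fail (already for $n=1$, $\fracint{\alpha}{}{x^j} f$ is only guaranteed to be $\alpha$-H\"older). Your fallback to ``mild additional smoothness of $f$'' therefore proves a slightly weaker statement than the lemma as written. It is worth noting that the paper's own citation quietly absorbs the same difficulty: Lemma 2.21(a) of Kilbas et al.\ is stated for the \emph{regularized} Caputo derivative, defined as the Riemann--Liouville derivative of $h$ minus its Taylor polynomial at the left endpoint, which needs no classical $n$-th derivative of $h$ and agrees with $\fracint{n-\alpha}{}{x^j} D^n_{x^j} h$ only when $h$ is smooth enough. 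If you replace your integration-by-parts comparison with that regularized definition directly, your vanishing-boundary-term computation (best done via $h^{(k)} = \fracint{\alpha-k}{}{x^j} f = O\bigl((x^j - x^j_{\min})^{\alpha-k}\bigr)$ for $k \le n-1$, using the semigroup property) closes the argument for all continuous $f$ without any extra smoothness, and your proof then matches the strength of the cited result.
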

\begin{proof}
Since $f$ is continuous, it is continuous in each variable separately. Then the first equality follows from Lemma 2.4 on page 74 and Lemma 2.9 (b) on page 77 of \cite{kilbas2006theory}, and the second equality follows from Lemma 2.21, part (a) on page 95 of \cite{kilbas2006theory}.
\end{proof}
We also have the following result which will be used later:
\begin{lemma}\label{lem:IRLD}
Let $f : \Omega \subset \R^m \to \R$ be continuous and let $\alpha \in (0,1)$. Then at any point $(x^1,\dots,x^m) \in \Omega$ and any $j=1,\dots,m$,
\[
\fracint{\alpha}{}{x^j} \rlderiv{\alpha}{}{x^j} f = f.
\]
\end{lemma}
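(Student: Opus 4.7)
The plan is to reduce the multivariate statement to a one-dimensional identity and then appeal to the well-known composition formula for the Riemann–Liouville operators, with the only real content being that the usual boundary term vanishes under mere continuity.

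First I would fix all coordinates except $x^j$, defining $g(x') := f(x^1,\ldots,x^{j-1},x',x^{j+1},\ldots,x^m)$ on $[x^j_{\min},x^j_{\max}]$. Since $f$ is continuous on $\Omega$, $g$ is continuous on $[a,b]$ with $a := x^j_{\min}$, $b := x^j_{\max}$. By the definitions of the partial fractional integral and partial Riemann--Liouville derivative given in Section~\ref{prelim:FC}, both sides of the claimed equality act only in the variable $x'$, so the multivariate identity at $(x^1,\ldots,x^m)$ reduces exactly to the one-dimensional statement $I^\alpha_{a+} \, D^\alpha_{a+,RL} g(x^j) = g(x^j)$ for continuous $g$ on $[a,b]$.

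Next I would invoke the standard composition result of Kilbas, Srivastava, and Trujillo \cite{kilbas2006theory} (specifically the $n=1$ case of their Lemma 2.5, applicable since $0<\alpha<1$), which yields
\[
I^\alpha_{a+}\, D^\alpha_{a+,RL} g(x) \;=\; g(x) \;-\; \frac{(x-a)^{\alpha-1}}{\Gamma(\alpha)}\,\bigl(I^{1-\alpha}_{a+} g\bigr)(a^+).
\]
Because $g$ is continuous on the compact interval $[a,b]$, we may estimate
\[
\bigl|I^{1-\alpha}_{a+} g(x)\bigr| \;\le\; \frac{\|g\|_\infty}{\Gamma(1-\alpha)} \int_a^x (x-t)^{-\alpha}\,dt \;=\; \frac{\|g\|_\infty}{\Gamma(2-\alpha)}\,(x-a)^{1-\alpha},
\]
which tends to $0$ as $x \to a^+$ since $1-\alpha>0$. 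Hence $(I^{1-\alpha}_{a+} g)(a^+)=0$, the correction term drops out, and $I^\alpha_{a+} D^\alpha_{a+,RL} g(x)=g(x)$. Translating back to the partial operators on $f$ gives the stated identity.

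The main obstacle is not the algebra but the regularity bookkeeping: the composition formula in \cite{kilbas2006theory} is stated for $g \in L^1(a,b)$ with $I^{1-\alpha}_{a+} g \in AC[a,b]$, so that $D^\alpha_{a+,RL} g$ exists almost everywhere and is integrable. A careful write-up should note that the continuity hypothesis on $f$ together with the implicit assumption that $\rlderiv{\alpha}{}{x^j} f$ exists in the sense of the definition above is precisely what is needed to place $g$ in the class where Lemma 2.5 of \cite{kilbas2006theory} applies, after which the vanishing boundary term computation above finishes the proof. Everything else is routine.
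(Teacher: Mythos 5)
Your proof is correct, but it takes a genuinely different route from the paper's. The paper's argument is a three-step definitional chain: it unfolds $\rlderiv{\alpha}{}{x^j} f = D_{x^j}\,\fracint{1-\alpha}{}{x^j} f$, observes that $\fracint{\alpha}{}{x^j} D_{x^j}$ applied to $\fracint{1-\alpha}{}{x^j} f$ is by definition the Caputo derivative $\cderiv{1-\alpha}{}{x^j}$ of $\fracint{1-\alpha}{}{x^j} f$, and then concludes directly from the Caputo half of the fundamental theorem of fractional calculus (Lemma~\ref{lem:FTFC}), $\cderiv{1-\alpha}{}{x^j}\,\fracint{1-\alpha}{}{x^j} f = f$. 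You instead reduce to one dimension and invoke the general composition formula $I^{\alpha}_{a+}D^{\alpha}_{a+}g = g - \frac{(x-a)^{\alpha-1}}{\Gamma(\alpha)}\bigl(I^{1-\alpha}_{a+}g\bigr)(a^+)$, killing the boundary term via the estimate $\bigl|I^{1-\alpha}_{a+}g(x)\bigr| \le \frac{\|g\|_\infty}{\Gamma(2-\alpha)}(x-a)^{1-\alpha} \to 0$; both the formula and the estimate are correct. The paper's version is shorter and reuses a lemma it has already stated, at the cost of hiding the mechanism inside the cited Caputo result; yours makes explicit \emph{why} continuity is the relevant hypothesis (it forces the boundary correction to vanish) and would extend essentially verbatim to bounded measurable $f$. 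Your caveat about regularity is well taken: the composition formula in Kilbas et al.\ requires $I^{1-\alpha}_{a+}g \in AC[a,b]$, which continuity of $g$ alone does not supply, so one must read the lemma as implicitly assuming $\rlderiv{\alpha}{}{x^j} f$ exists in the appropriate sense. Note, however, that the paper's own proof rests on the same tacit assumption, so this is a shared imprecision rather than a defect of your argument relative to the paper's.
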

\begin{proof}
The proof uses the FTFC (Lemma \ref{lem:FTFC}) presented above:
\begin{equation*}
\fracint{\alpha}{}{x^j} \rlderiv{\alpha}{}{x^j} f = \fracint{\alpha}{}{x^j} D_{x^j} \fracint{1-\alpha}{}{x^j} f = \cderiv{1-\alpha}{}{x^j} \fracint{1-\alpha}{}{x^j} f = f.
\end{equation*}
\end{proof}
The third identity that we will present is that the Riemann-Liouville fractional integral satisfies the so-called \emph{semigroup property}:
\begin{lemma}[Theorem 2.2, \cite{fracdiffeq}]\label{lem:semigroup}
Let $f : \Omega \subset \R^m \to \R$ be continuous and let $\alpha > 0$, $\beta > 0$. Then for any $j=1,\dots,m$ and any $a,b \in [x^j_{\min}, x^j_{\max}]$ with $a \leq b$,
\[
\fracint{\alpha}{a}{b}[x'] \fracint{\beta}{a}{x',x^j}[x''] f = \fracint{\alpha+\beta}{a}{b,x^j}[x'] f.
\]
\end{lemma}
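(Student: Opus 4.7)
The plan is to reduce the claim to the classical one-dimensional semigroup property of the Riemann--Liouville integral, and to prove the one-dimensional version by swapping the order of integration and evaluating the resulting inner integral via a Beta function.

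First I would fix all coordinates other than $x^j$ and set $g(t) := f(x^1,\dots,x^{j-1},t,x^{j+1},\dots,x^m)$ for $t \in [a,b]$; since $f$ is continuous on the compact set $\Omega$, $g$ is continuous and hence bounded on $[a,b]$. Both sides of the identity depend on $f$ only through $g$, so the task reduces to proving
\[
\fracint{\alpha}{a}{b}[x'] \fracint{\beta}{a}{x'}[x''] g = \fracint{\alpha+\beta}{a}{b}[x''] g.
\]
Writing out the left-hand side by definition gives the iterated integral
\[
\frac{1}{\Gamma(\alpha)\Gamma(\beta)} \int_a^b (b-x')^{\alpha-1} \int_a^{x'} (x'-x'')^{\beta-1}\, g(x'')\, dx''\, dx'
\]
over the triangular region $T := \{(x',x'') : a \le x'' \le x' \le b\}$.

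Next I would apply Fubini's theorem to exchange the order of integration, rewriting $T$ as $\{(x'',x') : a \le x'' \le b,\ x'' \le x' \le b\}$ to obtain
\[
\frac{1}{\Gamma(\alpha)\Gamma(\beta)} \int_a^b g(x'') \int_{x''}^b (b-x')^{\alpha-1}(x'-x'')^{\beta-1}\, dx'\, dx''.
\]
Justifying this swap is the main technical point: the kernel has endpoint singularities at $x'=b$ and $x'=x''$, but both are integrable because $\alpha,\beta > 0$, and $g$ is bounded on $[a,b]$, so the full integrand is absolutely integrable on $T$ and Fubini applies.

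Finally I would evaluate the inner integral by the substitution $x' = x'' + (b-x'')s$, which converts it to
\[
(b-x'')^{\alpha+\beta-1} \int_0^1 (1-s)^{\alpha-1} s^{\beta-1}\, ds = (b-x'')^{\alpha+\beta-1}\, B(\alpha,\beta),
\]
where $B(\alpha,\beta) = \Gamma(\alpha)\Gamma(\beta)/\Gamma(\alpha+\beta)$. Substituting this back collapses the prefactor to $1/\Gamma(\alpha+\beta)$ and leaves the kernel $(b-x'')^{(\alpha+\beta)-1}$, which matches the definition of $\fracint{\alpha+\beta}{a}{b}[x''] g$. The Fubini justification is the only delicate step; once that is in place, the rest is routine bookkeeping together with the standard Beta-function identity.
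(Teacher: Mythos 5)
Your proof is correct and is essentially the standard argument: the paper does not prove this lemma itself but cites it as Theorem 2.2 of the referenced text, whose proof is exactly your reduction to one dimension followed by Fubini and the Beta-function evaluation of the inner integral. The Fubini step is properly justified by the integrability of $(b-x')^{\alpha-1}(x'-x'')^{\beta-1}$ over the triangle together with the boundedness of $g$, so nothing is missing.
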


\subsection{Fractional vector calculus}
There are various approaches to defining FVC. Many approaches are similar to the integral order: use the standard basis of the vector space $V$ and represent the fractional gradient of a scalar function $f$ on $V$ by an $n$-tuple of the one-dimensional partial fractional derivatives along the coordinate axes. These operators, which we will term \emph{standard basis directional}, are the operators we discretize in this work (specifically, Tarasov's). Other approaches use an anisotropic mixture of fractional directional derivatives in each direction via an integral, and these operators are referred to as \emph{directional}. Yet a third approach is to use an isotropic mixture of the function values throughout $\R^n$ to define the operators, and these operators are referred to as \emph{isotropic} (termed ``Cartesian'' in \cite{UnifiedFVC}). Finally, \emph{nonlocal vector calculus} generalizes isotropic vector calculus by using an integral with an arbitrary interaction kernel to define a nonlocal gradient and divergence. Both standard basis directional operators \cite{meerschaert2006fractional,d2014multidimensional} and isotropic operators \cite{UnifiedFVC} are special cases of directional operators.

\subsubsection{Tarasov's fractional vector calculus}
In \cite{TARASOV20082756}, FVC operators using the one-dimensional (left-sided) Caputo derivative are defined. Considering a parallelepiped $\Omega = [x^1_{\min},x^1_{\max}] \times [x^2_{\min},x^2_{\max}] \times [x^3_{\min},x^3_{\max}]$, Tarasov's FVC (T-FVC) is based on the following generalization of the nabla operator to fractional order $\alpha > 0$:
\begin{equation}\label{def:frac-nabla-op}
	\cnabla{\alpha} := \mathbf{e}_1 \, \cderiv{\alpha}{}{x^1} + \mathbf{e}_2 \, \cderiv{\alpha}{}{x^2} + \mathbf{e}_3 \, \cderiv{\alpha}{}{x^3}.
\end{equation}
For a scalar-valued function $f$ and vector-valued function $\bm{F}$, the T-FVC operators are then defined as,
\begin{equation} \label{def:T-FVC-operators}
\operatorname{grad}_T^\alpha f := \cnabla{\alpha} f, \quad 
\operatorname{curl}_T^\alpha \bm{F} := \cnabla{\alpha} \times \bm{F}, \quad \operatorname{div}_T^\alpha \bm{F} := \cnabla{\alpha} \cdot \bm{F}.
\end{equation}
A property of the T-FVC operators is that they satisfy fractional generalizations of Green's, Stokes', and Gauss's theorem, which use fractional line and surface integrals, see \cite{TARASOV20082756} for details.


Another important property of the operators defined above, which is necessary for our goal to make structure-preserving FDEC operators, is that they satisfy 
\begin{equation}\label{eqn:frac-exactness}
	\operatorname{curl}_T^\alpha \, \operatorname{grad}_T^\alpha = \bm{0} \quad \text{and} \quad  \operatorname{div}_T^\alpha \, \operatorname{curl}_T^\alpha = 0,
\end{equation}
which are generalizations of the identities from vector calculus $\operatorname{curl} \operatorname{grad} = \bm{0}$ and $\operatorname{div} \operatorname{curl} = 0$. Property~\eqref{eqn:frac-exactness} is the main reason we choose to discretize the T-FVC operators. We plan to retain these properties on the discrete level to produce structure-preserving FDEC operators. This is feasible via rewriting these operators, and the details will be discussed later in Section~\ref{sec:FDEC}.

\subsubsection{Other fractional vector calculus}
From the fractional nabla operator \eqref{def:frac-nabla-op}, we can see that the T-FVC operators are standard basis directional. In \cite{ortigueira2018fractional}, another standard basis directional fractional vector calculus was introduced. Different from Tarasov's work, both left-sided and right-sided FVC operators were introduced. In addition, each coordinate can have different fractional orders, which makes their FVC framework more flexible. This is done by introducing left-sided and right-sided fractional nabla operators first and then defining two sets of fractional gradient, curl, and divergence operators accordingly in the same manner as the T-FVC operators.

An anisotropic, directional FVC was first present in \cite{meerschaert2006fractional}. The authors defined a fractional integration using a mixing measure, a positive finite measure on the set of unit vectors. This considers the relative strength of the dispersion in each radial direction and, hence, introduces anisotropy. The fractional gradient is defined by taking the usual gradient and then the fractional integral. On the other hand, the fractional curl and divergence operators are defined in a reverse order, i.e., taking the fractional integral first and then the usual curl and divergence. \cite{meerschaert2006fractional} and \cite{d2014multidimensional} point out that if the mixing measure is a point mass at each coordinate, the T-FVC gradient is a special case of the directional gradient defined in \cite{meerschaert2006fractional}. In addition, \cite{d2014multidimensional} further generalized the FVC in \cite{meerschaert2006fractional} by allowing the fractional order to vary by direction.

As for isotropic FVC, there is generally a single kind of isotropic fractional vector calculus that appears repeatedly in the literature, for example, Refs. \cite{fracPDEs1,fracPDEs2,vsilhavyfractional}, and can take on various functional forms which are all equivalent. Generally, authors only define a fractional gradient, but in \cite{vsilhavyfractional}, a fractional divergence is also defined. \cite{vsilhavyfractional} points out that due to the construction, FVC constructed using the standard basis directional approach depends on the chosen coordinate system, which could be a drawback since the resulting FVC operators do not transform under rotations. \cite{vsilhavyfractional} introduces an isotropic FVC that satisfies specific transformation rules for translation, rotation, and scaling, and they also generalize the definitions to any real-number fractional order. Unfortunately, although a fractional gradient and divergence are defined, a fractional curl is currently missing from isotropic FVC.

\emph{Nonlocal vector calculus} is any type of generalization of vector calculus that uses an integral with an interaction kernel to make vector calculus nonlocal. \cite{UnifiedFVC} presents a unified framework for nonlocal vector calculus, of which isotropic FVC is a special case. In addition, the authors prove that the directional and isotropic fractional gradients and divergences are equivalent up to constants if the mixing measure is constant. Just as with isotropic FVC, a nonlocal curl is not defined in nonlocal FVC.

Finally, although we only consider the T-FVC in this paper (which is a special case of directional FVC), we believe our approach can be generalized to more general cases, which is a subject of our ongoing work and will be reported in the future.

\subsection{Discrete exterior calculus} \label{prelim:DEC}
Since our discretization of FVC employs DEC, we give a short description of DEC following the methodology developed in~\cite{hirani2003discrete,grady2010discrete,desbrun2005discrete,gillette2009notes}.

Let us start with exterior calculus, which is essentially a generalization of vector calculus to more than three dimensions. Scalar and vector fields from vector calculus are replaced with $p$-forms. From a geometric perspective, a differential $p$-form can be viewed as an oriented $p$-dimensional density \cite{teixeira2013differential}. We denote a $p$-form as $\omega^p$ and the space of $p$-forms as $\Lambda^p$. If we define ${\mathfrak{J}}_{p,m}:=\{J=(i_{1},\ldots ,i_{p}):1\leq i_{1}<i_{2}<\cdots <i_{p}\leq m\}$, then the $p$-forms $\{dx^J\}_{J \in \mathfrak{J}_{p,m}}$ span the space of differential $p$-forms, where we denote  $dx^{J}:=dx^{i_{1}}\wedge \cdots \wedge dx^{i_{p}}=\bigwedge _{i\in J}dx^{i}$. Thus, any $p$-form can be written as
\[
\omega^p = \sum_{J \in \mathfrak{J}_{p,m}} a_J dx^J,
\]
where each $a_J$ is a scalar-valued function of the coordinates. The exterior derivative, an important operation on differential forms, is denoted as $d_p : \Lambda^p \to \Lambda^{p+1}$.
It is defined by:
    \[
    d_p \left( \sum_{J \in \mathfrak{J}_{p,m}} a_J dx^J \right) = \sum_{J \in \mathfrak{J}_{p,m}} \sum_{i=1}^m \frac{\partial a_J}{\partial x^i} dx^i \wedge dx^J.
    \]
In 3D, we have $d_0 = \operatorname{grad}$, $d_1 = \operatorname{curl}$, and $d_2 = \operatorname{div}$. The capstone of exterior calculus is Stokes's theorem, which is a generalization of the fundamental theorem of calculus. It states that under relatively mild smoothness requirements on a compact oriented $(p+1)$-dimensional manifold $\mathcal{S}$ with boundary $\partial \mathcal{S}$,
\begin{equation}\label{def:stokes-thm}
\int_{\mathcal{S}} d_p \, \omega^p = \int_{\partial \mathcal{S}} \omega^p, \quad \omega^p \in \Lambda^p.
\end{equation}

DEC is a discrete analog of exterior calculus. A \emph{$p$-cell}, i.e., a cell of order $p$, denoted by $\sigma^p$, may be represented as an ordered set of vertices comprising a convex $p$-polytope. For example, the $0$- $1$-, $2$-, and $3$-cells are called nodes, edges, faces, and volumes, respectively. A \emph{cell complex of dimension $n$}, or an \emph{$n$-complex}, is a collection of cells of order at most $n$ that obeys certain properties regarding how the cells are connected to each other. We denote the number of $p$-cells in the complex as $n_p$. Each $p$-cell is \emph{oriented} and may have one of two possible orientations. A node ($0$-cell) has two orientations, ``sourceness'' or ``sinkness''. The orientation of an edge ($1$-cell) corresponds to a notion of direction, the orientation of a face ($2$-cell) corresponds to a notion of clockwise/counterclockwise, and the orientation of a volume ($3$-cell) corresponds to a notion of outward/inward.

A $p$-chain $\boldsymbol{\tau}_p$ represents a domain of integration, and is a formal sum of the $p$-cells with coefficients in $\mathbb{Z}$, i.e., $\boldsymbol{\tau}_p = \sum_{i=1}^{n_p} a_i \sigma^p_i$, $\sigma^p_i \in C_p$ and $a_i \in \Z$, where we denote the vector space of $p$-chains as $C_p$. Note that the set of $p$-cells forms a basis for $C_p$. Without confusion, we abuse the notation and also use a vector representation, i.e., $\boldsymbol{\tau}_p = [a_1, \dots a_{n_p}]^\top$. In addition, a $p$-cochain, also known as a discrete $p$-form, is a linear map $\mathbf{c}^p$ from $C_p$ to $\R$. The vector space of $p$-cochains is denoted $C^p$. The \emph{natural pairing}  of a $p$-cochain $\mathbf{c}^p$ and a $p$-chain $\boldsymbol{\tau}_p$ is defined as
\[
\llbracket \mathbf{c}^p, \boldsymbol{\tau}_p \rrbracket :=
\mathbf{c}^p(\boldsymbol{\tau}_p) =
\mathbf{c}^p \left ( \sum_{i=1}^{n_p} a_i \sigma^p_i \right )
=
\sum_{i=1}^{n_p} a_i \mathbf{c}^p (\sigma^p_i). 
\]
Therefore, we can identify a $p$-cochain as a vector $\mathbf{c}^p = [\mathbf{c}^p (\sigma^p_1), \dots, \mathbf{c}^p (\sigma^p_{n_p})]^\top$, which implies that $\llbracket \mathbf{c}^p, \boldsymbol{\tau}_p \rrbracket = (\mathbf{c}^p) ^\top \boldsymbol{\tau}_p$.

Now we introduce the \emph{coboundary operators} or \emph{discrete exterior derivative operators}, $\mathbb{D}_p : C^p \to C^{p+1}$ for $0 \leq p \leq n-1$, which are the discrete versions of the exterior derivatives $d_p : \Lambda^p \to \Lambda^{p+1}$, and can be represented as incidence matrices $\mathbb{D}_p \in \mathbb{R}^{n_{p+1} \times n_p}$ which are defined as
\[
\mathbb{D}_p(i,j) = \begin{cases}
	0 & \text{if $\sigma^p_j$ is not on the boundary of $\sigma^{p+1}_i$,} \\
	+1 & \text{if $\sigma^p_j$ is coherent with the induced orientation of $\sigma^{p+1}_i$,} \\
	-1 & \text{if $\sigma^p_j$ is not coherent with the induced orientation of $\sigma^{p+1}_i$}.
\end{cases}
\]
Furthermore, we denote the \emph{boundary operator} as $\partial_{p} : C_p \to C_{p-1}$, which satisfies $\mathbb{D}_p = \partial_{p+1}^\top$.

An important property of the coboundary and boundary operators is that $\mathbb{D}_{p+1} \mathbb{D}_p = 0$ and $\partial_{p+1} \partial_{p+2} = 0$, for $0 \leq p \leq n-2$. In addition, the DEC version of Stokes' theorem can be stated as follows:
\begin{equation}\label{def:DEC-stokes-thm}
\llbracket \mathbb{D}_p \mathbf{c}^p, \boldsymbol{\tau}_{p+1} \rrbracket = \llbracket \mathbf{c}^p, \partial_{p+1} \boldsymbol{\tau}_{p+1} \rrbracket.
\end{equation}
Let $\mathcal{R}_p : \Lambda^p \to C^p$ be the $p$-th \emph{de Rham map}, which is defined as, for $\omega^p \in \Lambda^p$ and $\boldsymbol{\tau}_p \in C_p$, $
\llbracket \mathcal{R}_p \omega^p , \boldsymbol{\tau}_p \rrbracket = (\mathcal{R}_p \omega^p)(\boldsymbol{\tau}_p) := \int_{\boldsymbol{\tau}_p} \omega^p$.  Using \eqref{def:stokes-thm} and \eqref{def:DEC-stokes-thm}, we have that for any $\omega^p \in \Lambda^p$ and $\boldsymbol{\tau}_{p+1} \in C_{p+1}$,
\begin{align*}
&\llbracket \mathbb{D}_p \mathcal{R}_p \omega^p, \boldsymbol{\tau}_{p+1} \rrbracket = \llbracket \mathcal{R}_p \omega^p, \partial_{p+1} \boldsymbol{\tau}_{p+1} \rrbracket  =
\int_{\partial_{p+1} \boldsymbol{\tau}_{p+1}} \omega^p =
\int_{\boldsymbol{\tau}_{p+1}} d_p \omega^p =
\llbracket \mathcal{R}_{p+1}d_p\omega^p, \boldsymbol{\tau}_{p+1} \rrbracket,
\end{align*}
which implies
\begin{equation*} 
\mathbb{D}_p \mathcal{R}_p = \mathcal{R}_{p+1} d_p,
\end{equation*}
and the following commutative diagram,
\begin{center}
	\begin{tikzcd}
		\Lambda^p \arrow[r, "d_p"] \arrow[d, "\mathcal{R}_p"] &  \Lambda^{p+1} \arrow[d, "\mathcal{R}_{p+1}"] \\
		C^p \arrow[r, "\mathbb{D}_p"]           &  C^{p+1}
	\end{tikzcd}
\end{center}
which essentially says that $\mathbb{D}_p$ is the discretization of $d_p$. For example, in 3D, $\mathbb{D}_0$, $\mathbb{D}_1$, and $\mathbb{D}_2$ are discretizations of the gradient, the curl, and the divergence, respectively. In addition, such a discretization is \emph{structure-preserving} since we have $d_{p+1} d_p = 0$ on the continuous level and $\mathbb{D}_{p+1} \mathbb{D}_p = 0$ on the discrete level. In Section~\ref{sec:FDEC}, we define our FDEC operators to be structure-preserving as well, satisfying the analogous property that $\mathbb{D}_{p+1}^\alpha \mathbb{D}_p^\alpha = 0$.

Finally, we introduce the following generalized incidence matrix $\mathbb{D}_{0\to q} \in \mathbb{R}^{n_{q} \times n_0}$ for $2 \leq q \leq n$, 
\[
\mathbb{D}_{0 \to q}(i,j) \hskip -2pt = \hskip -2pt \begin{cases}
	0 & \hskip -6pt \text{if $\sigma^0_j$ is not on the boundary of $\sigma^{q}_i$,} \\
	\Pi_{\sigma^1_k \cap \partial \sigma^q_i \neq \emptyset, \, \partial \sigma^1_k \cap \sigma^0_j   \neq \emptyset} \, \mathbb{D}_{0}(k,j) & \hskip -6pt \text{if $\sigma^0_j$ is on the boundary of $\sigma^{q}_i$,} \\
\end{cases}
\]
and define $\mathbb{D}_{0\to 1} := \mathbb{D}_0$. Here we use the notation $\partial \sigma^q_i$ to denote the boundary of $\sigma^q_i$ which consists of all the $p$-cells, $0\leq p \leq q-1$, that are on the boundary of $\sigma^q_i$. In 3D, the edge-node, face-node, and volume-node incidence matrices, $\mathbb{D}_{0 \to 1}$, $\mathbb{D}_{0 \to 2}$, and $\mathbb{D}_{0 \to 3}$, are useful for our discretization.

\section{Fractional discrete exterior calculus via reformulation}\label{sec:FDEC}
This section describes the main result of this work, namely, the discretization of T-FVC via DEC. In Section~\ref{sec:equivalence}, we show an equivalent way to write Tarasov's FVC, which is suitable for discretization using DEC. Our discretization of T-FVC using DEC is then described in detail in Section~\ref{sec:discretization}.

\subsection{Reformulation and equivalence} \label{sec:equivalence}
In this section, we define a Caputo fractional exterior derivative of order $\alpha$ that is equivalent to T-FVC in 3D, and then reformulate this fractional exterior derivative into a form that can be easily discretized using DEC. Again, we mainly focus on the case $0<\alpha<1$ to keep the presentation simple and comment that our results hold for general $\alpha > 0$. To avoid confusion, we use $\beta>0$ to denote a fractional integration or differentiation order that is not necessarily $1-\alpha$ or $\alpha$. In addition, although we use exterior calculus notations mainly in this section, we confine our discussion on the region $\Omega = [x^1_{\min}, x^1_{\max}] \times \cdots \times [x^m_{\min}, x^m_{\max}] \subset \mathbb{R}^m$ and leave the general manifold case for future work.

In the following lemma, we present some properties of the fractional partial integrals and the partial Riemann-Liouville fractional derivatives introduced in Section~\ref{prelim:FC}. Those properties are useful for the reformulation.
\begin{lemma}
	Let $f : \Omega \subseteq \R^m \to \R$, $g : \Omega \subseteq \R^m \to \R$ be sufficiently smooth scalar fields. Then the following identities hold.
	\begin{align}
		\fracint{\beta}{}{x^j} (c_1 f + c_2 g) &= c_1 \fracint{\beta}{}{x^j} f + c_2 \fracint{\beta}{}{x^j} g \quad (\beta>0,c_1 \in \R, c_2 \in \R), \label{eqn:lem-id-3} \\
		\fracint{\beta_1}{}{x^i} \, \fracint{\beta_2}{}{x^j} f &= \fracint{\beta_2}{}{x^j} \, \fracint{\beta_1}{}{x^i} f \quad (\beta_1>0,\beta_2>0,i \neq j), \label{eqn:lem-id-4}\\
		D^k_{x^i} \, \fracint{\beta}{}{x^j} f &= \fracint{\beta}{}{x^j} \, D^k_{x^i} f \quad (\beta>0,k \in \mathbb{N}, i \neq j), \label{eqn:lem-id-5}\\
		\fracint{\beta_1}{}{x^i} \, \rlderiv{\beta_2}{}{x^j} f &=  \rlderiv{\beta_2}{}{x^j} \, \fracint{\beta_1}{}{x^i} f \quad (\beta_1 > 0, \beta_2 > 0, i \neq j). \label{eqn:lem-id-6}
	\end{align}
\end{lemma}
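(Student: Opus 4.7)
The plan is to prove the four identities in the natural order \eqref{eqn:lem-id-3}, \eqref{eqn:lem-id-4}, \eqref{eqn:lem-id-5}, \eqref{eqn:lem-id-6}, where each result is either immediate from classical calculus or builds on the earlier ones. The underlying theme is that the partial fractional integral $\fraciint{\beta}{x^j}$ is a linear operator defined by a Riemann integral with respect to a single coordinate, so it inherits the good behavior of iterated Riemann integrals and differentiation under the integral sign whenever $f$ is smooth enough to justify them. The ``sufficiently smooth'' hypothesis is precisely what makes the Fubini-Tonelli and Leibniz exchanges legal.

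For \eqref{eqn:lem-id-3}, I would simply expand both sides by the definition of $\fraciint{\beta}{x^j}$ and invoke linearity of the Riemann integral; this is essentially a one-line verification. For \eqref{eqn:lem-id-4}, I would write out each side as an iterated integral,
\[
\fraciint{\beta_1}{x^i}\fraciint{\beta_2}{x^j} f = \frac{1}{\Gamma(\beta_1)\Gamma(\beta_2)}\int_{x^i_{\min}}^{x^i}\int_{x^j_{\min}}^{x^j}\frac{f(\dots,\xi,\dots,\eta,\dots)}{(x^i-\xi)^{1-\beta_1}(x^j-\eta)^{1-\beta_2}}\,d\eta\,d\xi,
\]
and similarly for the reversed composition. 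Since $i \neq j$, the two one-dimensional integrals are over independent variables on a rectangle, so continuity of $f$ together with integrability of the weak singularities $(x^i-\xi)^{\beta_1-1}$ and $(x^j-\eta)^{\beta_2-1}$ lets me apply Fubini's theorem to swap the order of integration, which gives the identity.

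For \eqref{eqn:lem-id-5}, the key observation is that the integration in $\fraciint{\beta}{x^j}$ is in the variable $x^j$, while the derivative $D^k_{x^i}$ is taken in a different variable; the limits of integration and the singular kernel $(x^j-x')^{\beta-1}$ do not depend on $x^i$. Hence Leibniz's rule for differentiation under the integral sign applies and the derivative passes through the integral, which yields the identity; iterating gives the case of general $k$. Finally, for \eqref{eqn:lem-id-6} I would unfold the Riemann-Liouville derivative as $\rlderiiv{\beta_2}{x^j} = D^n_{x^j} \fraciint{n-\beta_2}{x^j}$ with $n = \lfloor \beta_2 \rfloor + 1$, then apply \eqref{eqn:lem-id-5} to commute $D^n_{x^j}$ past $\fraciint{\beta_1}{x^i}$ (valid because $i \neq j$) and apply \eqref{eqn:lem-id-4} to commute $\fraciint{n-\beta_2}{x^j}$ past $\fraciint{\beta_1}{x^i}$, then reassemble $\rlderiiv{\beta_2}{x^j}$.

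The main obstacle is not conceptual but technical: one must justify that $f$ is regular enough for Fubini and Leibniz to apply near the integrable endpoint singularities. As long as ``sufficiently smooth'' is read to mean that $f$ and its partial derivatives appearing in $\rlderiiv{\beta_2}{x^j}$ are continuous on $\Omega$, the singular factors are integrable and the exchanges are all legitimate; no subtlety arises beyond this verification.
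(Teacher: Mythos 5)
Your proposal is correct and follows essentially the same route as the paper: linearity of the ordinary integral for \eqref{eqn:lem-id-3}, Fubini's theorem for \eqref{eqn:lem-id-4}, the Leibniz rule iterated (the paper phrases this as induction on $k$) for \eqref{eqn:lem-id-5}, and deriving \eqref{eqn:lem-id-6} by unfolding $\rlderiv{\beta_2}{}{x^j}$ and combining \eqref{eqn:lem-id-4} with \eqref{eqn:lem-id-5}. Your write-up of the last step is in fact more explicit than the paper's one-line remark, and your attention to justifying the exchanges near the integrable singularities is a welcome addition.
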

\begin{proof}
	\eqref{eqn:lem-id-3} can be easily verified by the linearity of the regular integral, which implies that the Riemann-Liouville fractional partial integral is linear. \eqref{eqn:lem-id-4} is a consequence of Fubini's theorem.

	To prove \eqref{eqn:lem-id-5}, we use mathematical induction on $k$. The base case $k=1$ can be shown using the Leibniz integral rule directly. For the inductive step, assume $D_{x^i}^k \fracint{\beta}{}{x^j} = \fracint{\beta}{}{x^j} D_{x^i}^k$ for some $k \in \mathbb{N}$. Then
	\[
	D_{x^i}^{k+1} \fracint{\beta}{}{x^j}
	= D_{x^i} D_{x^i}^k \fracint{\beta}{}{x^j}
	= D_{x^i} \fracint{\beta}{}{x^j} D_{x^i}^k
	= \fracint{\beta}{}{x^j} D_{x^i} D_{x^i}^k
	= \fracint{\beta}{}{x^j} D_{x^i}^{k+1}.
	\]
	Finally, \eqref{eqn:lem-id-6} follows from \eqref{eqn:lem-id-4} and  \eqref{eqn:lem-id-5}.
\end{proof}

Next, we define a Caputo fractional exterior derivative as follows:
\begin{definition} \label{def:Caputo-FVC-operators}
	We define the Caputo fractional exterior derivatives of order $\alpha > 0$, $d_p^\alpha: \Lambda^p \to \Lambda^{p+1}$, as
	\[
	d_p^\alpha \left( \sum_{J \in \mathfrak{J}_{p,m}} a_J dx^J \right) := \sum_{J \in \mathfrak{J}_{p,m}} \sum_{i=1}^m \left(\cderiv{\alpha}{}{x^i} a_J\right)\, dx^i \wedge dx^J.
	\]
	where coefficients $a_J$ are real-valued functions on $\Omega$. 
\end{definition}
Note that Definition~\ref{def:Caputo-FVC-operators} is essentially the same as the fractional exterior derivative that appears in \cite{baleanu2009fractional}. It is also similar to the fractional exterior derivative defined in \cite{cottrill2001fractional}, where the Riemann-Liouville, rather than Caputo, partial derivative is used.

Additionally, in 3D, by direct calculation and identifying $1$- and $2$-forms as vector fields and $3$-forms as scalar functions, we can verify that Definition~\ref{def:Caputo-FVC-operators} is equivalent to the T-FVC operators \eqref{def:T-FVC-operators}, which is summarized in the following proposition.
\begin{proposition} \label{prop:equivalence}
In 3D, for $\alpha > 0$, 
$d_0^\alpha = \operatorname{grad}^{\alpha}_T$, $d_1^\alpha = \operatorname{curl}^{\alpha}_T$, and $d_2^\alpha =  \operatorname{div}^{\alpha}_T$.
\end{proposition}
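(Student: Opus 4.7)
The plan is to verify the three equalities by directly applying Definition~\ref{def:Caputo-FVC-operators} in three dimensions (so $m=3$) for $p=0,1,2$, and then matching each resulting differential form against the corresponding T-FVC operator under the standard exterior-calculus identifications: scalar fields with 0-forms and 3-forms, vector fields $\bm{F}=(F_1,F_2,F_3)$ with 1-forms via $\bm{F}\leftrightarrow F_1\,dx^1 + F_2\,dx^2 + F_3\,dx^3$, and with 2-forms via $\bm{F}\leftrightarrow F_1\,dx^2\wedge dx^3 + F_2\,dx^3\wedge dx^1 + F_3\,dx^1\wedge dx^2$. No analytic machinery is needed beyond Definition~\ref{def:Caputo-FVC-operators} itself; all three identities reduce to the same wedge-product bookkeeping that shows $d = \operatorname{grad}/\operatorname{curl}/\operatorname{div}$ in the classical setting, with each ordinary partial derivative replaced by the corresponding left-sided Caputo partial derivative.

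The cases $p=0$ and $p=2$ are essentially immediate. For $p=0$, Definition~\ref{def:Caputo-FVC-operators} gives $d_0^\alpha f = \sum_{i=1}^3 (\cderiv{\alpha}{}{x^i} f)\,dx^i$, which is exactly the 1-form corresponding to $\cnabla{\alpha} f = \operatorname{grad}_T^\alpha f$. For $p=2$, applying $d_2^\alpha$ to $\omega = F_1\,dx^2\wedge dx^3 + F_2\,dx^3\wedge dx^1 + F_3\,dx^1\wedge dx^2$ produces nine terms of the form $(\cderiv{\alpha}{}{x^i} F_j)\,dx^i \wedge dx^J$; the six with a repeated factor vanish, and the remaining three assemble into $(\cderiv{\alpha}{}{x^1}F_1 + \cderiv{\alpha}{}{x^2}F_2 + \cderiv{\alpha}{}{x^3}F_3)\,dx^1\wedge dx^2\wedge dx^3$, whose coefficient is $\cnabla{\alpha}\cdot \bm{F} = \operatorname{div}_T^\alpha \bm{F}$.

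The $p=1$ case requires a bit more sign-tracking. Writing $\omega = \sum_{j=1}^3 F_j\,dx^j$, Definition~\ref{def:Caputo-FVC-operators} yields $d_1^\alpha \omega = \sum_{i,j=1}^3 (\cderiv{\alpha}{}{x^i} F_j)\,dx^i \wedge dx^j$. Discarding the three diagonal terms and combining each antisymmetric pair via $dx^i\wedge dx^j = -dx^j\wedge dx^i$, the six remaining terms collapse into three coefficients that, read against the 2-form basis $\{dx^2\wedge dx^3,\,dx^3\wedge dx^1,\,dx^1\wedge dx^2\}$, exactly reproduce the components of $\cnabla{\alpha}\times \bm{F} = \operatorname{curl}_T^\alpha \bm{F}$. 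Since the proof is purely computational, there is no conceptual obstacle; the one thing to watch is the conventional orientation $dx^3\wedge dx^1$ (rather than $dx^1\wedge dx^3$) for the middle 2-form basis element, which is what makes the curl-component signs line up correctly.
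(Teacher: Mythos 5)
Your proposal is correct and follows exactly the route the paper takes: the paper itself justifies Proposition~\ref{prop:equivalence} only by noting it holds ``by direct calculation and identifying $1$- and $2$-forms as vector fields and $3$-forms as scalar functions,'' which is precisely the wedge-product bookkeeping you carry out for $p=0,1,2$. Your sign-tracking for the curl case (including the $dx^3\wedge dx^1$ orientation) is the right and only delicate point, and it checks out.
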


Although it is possible to show that $d_{p+1}^{\alpha} d_p^{\alpha} = 0$ directly from Definition~\ref{def:Caputo-FVC-operators}, here we take another approach by reformulating $d_p^{\alpha}$, which also allows us to discretize $d_p^{\alpha}$ using DEC intuitively. To this end, first, we need to define the following Riemann-Liouville fractional integration operator of order $\beta > 0$, $\fraciint{\beta}{p}: \Lambda^p \to \Lambda^p$, for $p \geq 1$,
\[
\fraciint{\beta}{p} \left( \sum_{J \in \mathfrak{J}_{p,m}} a_J dx^J \right) := \sum_{J = (i_1,\dots,i_p) \in \mathfrak{J}_{p,m}} \left(\fracint{\beta}{}{x^{i_1}} \cdots \fracint{\beta}{}{x^{i_p}} a_J\right) dx^J,
\]
and the following Riemann-Liouville fractional differentiation operator of order $\beta > 0$, $\rlderiiv{\beta}{p}:\Lambda^p \to \Lambda^p$, for $p \geq 1$,
\[
\rlderiiv{\beta}{p} \left( \sum_{J \in \mathfrak{J}_{p,m}} a_J dx^J \right) := \sum_{J = (i_1,\dots,i_p) \in \mathfrak{J}_{p,m}} \left(\rlderiv{\beta}{}{x^{i_1}} \cdots \rlderiv{\beta}{}{x^{i_p}} a_J\right) dx^J.
\]
For the $p=0$ case, we simply define $\fraciint{\beta}{0} := \operatorname{id}$ and $\rlderiiv{\beta}{0} := \operatorname{id}$. Note that the iterated fractional integrals and derivatives $\fracint{\beta}{}{x^{i_1}} \cdots \fracint{\beta}{}{x^{i_p}}$ and $\rlderiv{\beta}{}{x^{i_1}} \cdots \rlderiv{\beta}{}{x^{i_p}}$ present in these definitions have been defined in \cite{kilbas2006theory}, in which the iterated fractional integrals are called ``mixed Riemann-Liouville fractional integrals with respect to a part of the variables'' (and similarly for the iterated fractional derivative).

Using $\fraciint{1-\alpha}{p}$ and $\rlderiiv{1-\alpha}{p}$, we can reformulate $d_p^{\alpha}$ as follows. 
\begin{theorem} \label{thm:FDEC-alter}
	Let $0 < \alpha < 1$. Then
	$d_p^\alpha = \fraciint{1-\alpha}{p+1} \,d_p\, \rlderiiv{1-\alpha}{p}$.
\end{theorem}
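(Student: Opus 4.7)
The plan is to apply the operators in sequence to a generic $p$-form $\omega = \sum_{J \in \mathfrak{J}_{p,m}} a_J \, dx^J$ and reduce the right-hand side to the definition of $d_p^\alpha\omega$ term by term. First, I would compute $\rlderiiv{1-\alpha}{p}\omega$ by the definition, producing coefficients of the form $\prod_{k=1}^p \rlderiv{1-\alpha}{}{x^{i_k}} a_J$ for each multi-index $J=(i_1,\ldots,i_p)$. Then I would apply $d_p$ and observe that $dx^i \wedge dx^J = 0$ whenever $i \in J$, so only terms with $i \notin J$ survive.

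The crucial intermediate step is to push the classical partial $\partial/\partial x^i$ past every $\rlderiv{1-\alpha}{}{x^{i_k}}$ with $i_k \neq i$. This is not stated directly in the lemma, but follows by writing $\rlderiv{1-\alpha}{}{x^{i_k}} = D_{x^{i_k}} \fracint{\alpha}{}{x^{i_k}}$, commuting the two classical partials by equality of mixed partials for sufficiently smooth $a_J$, and commuting $D_{x^i}$ with $\fracint{\alpha}{}{x^{i_k}}$ via identity \eqref{eqn:lem-id-5}. This rearranges the coefficient into $(\prod_{k=1}^p \rlderiv{1-\alpha}{}{x^{i_k}})\, D_{x^i} a_J$.

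Next, applying $\fraciint{1-\alpha}{p+1}$ introduces the fractional integrals $\fracint{1-\alpha}{}{x^i}\prod_{k=1}^p \fracint{1-\alpha}{}{x^{i_k}}$ acting on the coefficient attached to the $(p+1)$-form $dx^i \wedge dx^J$ (after reordering into standard index order, with a sign that cancels on both sides). Using the commutativity of fractional integrals in different variables, identity \eqref{eqn:lem-id-4}, together with identity \eqref{eqn:lem-id-6} to move each $\fracint{1-\alpha}{}{x^{i_k}}$ next to its matching $\rlderiv{1-\alpha}{}{x^{i_k}}$, each such pair collapses to the identity by Lemma~\ref{lem:IRLD}. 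What remains is precisely $\fracint{1-\alpha}{}{x^i} D_{x^i} a_J = \cderiv{\alpha}{}{x^i} a_J$, since $n = \lfloor\alpha\rfloor + 1 = 1$ for $0<\alpha<1$.

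Summing over $J$ and $i$, and using again that $dx^i \wedge dx^J$ vanishes when $i \in J$ to extend the inner sum freely over $i = 1,\ldots,m$, the result matches Definition~\ref{def:Caputo-FVC-operators} of $d_p^\alpha \omega$. The main obstacle is the commutation bookkeeping in the second step: justifying $D_{x^i}\,\rlderiv{1-\alpha}{}{x^j} = \rlderiv{1-\alpha}{}{x^j}\,D_{x^i}$ for $i\neq j$, and tracking the reordering sign when $dx^i \wedge dx^J$ is rewritten in the standard increasing order required by the definition of $\fraciint{1-\alpha}{p+1}$; the rest of the argument is routine application of the lemmas already established.
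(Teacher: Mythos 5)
Your proposal is correct and follows essentially the same route as the paper's proof: expand a generic $p$-form, apply the three operators in sequence, drop the $i\in J$ terms via $dx^i\wedge dx^J=0$, use the commutation identities \eqref{eqn:lem-id-4}--\eqref{eqn:lem-id-6} to pair each $\fracint{1-\alpha}{}{x^{i_k}}$ with its matching $\rlderiv{1-\alpha}{}{x^{i_k}}$ and collapse via Lemma~\ref{lem:IRLD}, leaving $\fracint{1-\alpha}{}{x^i}D_{x^i}a_J=\cderiv{\alpha}{}{x^i}a_J$. The only cosmetic difference is that you commute $D_{x^i}$ past the Riemann--Liouville derivatives before applying $\fraciint{1-\alpha}{p+1}$, whereas the paper commutes $D_{x^i}$ past the fractional integrals afterward; your explicit handling of the reordering sign for $dx^i\wedge dx^J$ is a detail the paper leaves implicit.
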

\begin{proof}
	If $p=0$, then by definition, $\rlderiiv{1-\alpha}{0} = \operatorname{id}$, thus
	$
	\fraciint{1-\alpha}{p+1} \, d_p \, \rlderiiv{1-\alpha}{p} = \fraciint{1-\alpha}{1} \, d_0.
	$
	Taking a 0-form, i.e., a scalar field, $f : \R^m \to \R$, 
	we have,
	\[
	\fraciint{1-\alpha}{1}  \, d_0 f = \fraciint{1-\alpha}{1} \sum_{i=1}^m \left(D_{x^i} f\right) \, dx^i = \sum_{i=1}^m \left(\fracint{1-\alpha}{}{x^i} D_{x^i} f\right)\, dx^i = \sum_{i=1}^m \left(\cderiv{\alpha}{}{x^i} f\right)\, dx^i = d_0^\alpha f.
	\]
	If $p \geq 1$, then for a $p$-form
	$
	\omega^p = \sum_{J \in \mathfrak{J}_{p,m}} a_J dx^J,
	$
	we have
	\begin{align*}
		d_p \, \rlderiiv{1-\alpha}{p} \, \omega^p &= d_p \sum_{J \in \mathfrak{J}_{p,m}} \left(\rlderiv{1-\alpha}{}{x^{i_1}} \cdots \rlderiv{1-\alpha}{}{x^{i_p}} a_J \right) \, dx^J \\
		&= \sum_{J \in \mathfrak{J}_{p,m}} \sum_{i=1}^m \left(D_{x^i} \rlderiv{1-\alpha}{}{x^{i_1}} \cdots \rlderiv{1-\alpha}{}{x^{i_p}} a_J\right)\, dx^i \wedge dx^J \\
		&= \sum_{J \in \mathfrak{J}_{p,m}} \sum_{i \in \{1,\dots,m\} \setminus J} \left(D_{x^i} \rlderiv{1-\alpha}{}{x^{i_1}} \cdots \rlderiv{1-\alpha}{}{x^{i_p}} a_J\right)\, dx^i \wedge dx^J,
	\end{align*}
where we use the fact that $dx^i \wedge dx^J = 0$ if $i \in J$ in the last step.  Finally, applying $\fraciint{1-\alpha}{p+1}$ from the left, we obtain
	\begin{align*}
	& \qquad	\fraciint{1-\alpha}{p+1} \, d_p \, \rlderiiv{1-\alpha}{p} \omega^p\\ &= \sum_{J \in \mathfrak{J}_{p,m}} \sum_{i \in \{1,\dots,m\} \setminus J} \left(\fracint{1-\alpha}{}{x^i} \fracint{1-\alpha}{}{x^{i_1}} \cdots \fracint{1-\alpha}{}{x^{i_p}} D_{x^i} \rlderiv{1-\alpha}{}{x^{i_1}} \cdots \rlderiv{1-\alpha}{}{x^{i_p}} a_J\right) \, dx^i \wedge dx^J & \\ 
		&= \sum_{J \in \mathfrak{J}_{p,m}} \sum_{i=1}^m \left(\fracint{1-\alpha}{}{x^i} D_{x^i} \fracint{1-\alpha}{}{x^{i_1}} \cdots \fracint{1-\alpha}{}{x^{i_p}} \rlderiv{1-\alpha}{}{x^{i_1}} \cdots \rlderiv{1-\alpha}{}{x^{i_p}} a_J\right)\, dx^i \wedge dx^J & \\ 
		&= \sum_{J \in \mathfrak{J}_{p,m}} \sum_{i=1}^m \left(\cderiv{\alpha}{}{x^i} \fracint{1-\alpha}{}{x^{i_1}} \rlderiv{1-\alpha}{}{x^{i_1}} \cdots \fracint{1-\alpha}{}{x^{i_p}} \rlderiv{1-\alpha}{}{x^{i_p}} a_J\right)\, dx^i \wedge dx^J & \\ 
		&= \sum_{J \in \mathfrak{J}_{p,m}} \sum_{i=1}^m \left(\cderiv{\alpha}{}{x^i} a_J\right)\, dx^i \wedge dx^J &  \\
		&= d_p^\alpha \omega^p.
	\end{align*}
This completes the proof. 
\end{proof}
\begin{remark}
Following the same argument, Theorem~\ref{thm:FDEC-alter} can be generalized to any $ 0< \alpha \notin \mathbb{N}$ by using high-order analogues of the exterior derivatives.
\end{remark}

From Lemma~\ref{lem:FTFC} and \eqref{eqn:lem-id-6}, we can easily verify $\rlderiiv{1-\alpha}{p} \, \fraciint{1-\alpha}{p} = \operatorname{id}$. Thus,
\begin{equation*}
	d_{p+1}^{\alpha} \, d_p^{\alpha} = \left(\fraciint{1-\alpha}{p+2} \, d_{p+1} \, \rlderiiv{1-\alpha}{p+1}\right) \left(\fraciint{1-\alpha}{p+1} \, d_p \, \rlderiiv{1-\alpha}{p}\right) = \fraciint{1-\alpha}{p+2} \, d_{p+1}  d_p \rlderiiv{1-\alpha}{p} = 0.
\end{equation*}

From Proposition~\ref{prop:equivalence} and Theorem~\ref{thm:FDEC-alter}, we have the following corollary which reformulates the T-FVC operators. Such a reformulation enables us to discretize T-FVC using the DEC framework.  
\begin{corollary}
In 3D, for $0 < \alpha <1$, we have
\begin{equation*}
	\operatorname{grad}_T^{\alpha} = \fraciint{1-\alpha}{1} \, \operatorname{grad}, \ \ \operatorname{curl}_T^{\alpha} = \fraciint{1-\alpha}{2} \, \operatorname{curl} \, \rlderiiv{1-\alpha}{1}, \ \  \operatorname{div}_T^{\alpha} = \fraciint{1-\alpha}{3} \, \operatorname{div} \, \rlderiiv{1-\alpha}{2}.
\end{equation*}
\end{corollary}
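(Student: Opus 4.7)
The plan is to derive this corollary as an immediate consequence of the two preceding results, essentially by chaining equalities and then invoking the standard 3D identification of $p$-forms with scalar or vector fields.

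First I would invoke Proposition~\ref{prop:equivalence}, which already provides the identifications $d_0^\alpha = \operatorname{grad}_T^\alpha$, $d_1^\alpha = \operatorname{curl}_T^\alpha$, and $d_2^\alpha = \operatorname{div}_T^\alpha$ in 3D. Then I would substitute into each of these equalities the reformulation given by Theorem~\ref{thm:FDEC-alter}, namely $d_p^\alpha = \fraciint{1-\alpha}{p+1}\, d_p\, \rlderiiv{1-\alpha}{p}$ for $p=0,1,2$.

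Next I would specialize each resulting composition to 3D. For $p=0$, the convention $\rlderiiv{1-\alpha}{0} = \operatorname{id}$ and the identification $d_0 = \operatorname{grad}$ yield the first formula $\operatorname{grad}_T^\alpha = \fraciint{1-\alpha}{1}\, \operatorname{grad}$. For $p=1$, using $d_1 = \operatorname{curl}$ under the identification of $1$-forms with vector fields and $2$-forms with vector fields via the Hodge star, the formula becomes $\operatorname{curl}_T^\alpha = \fraciint{1-\alpha}{2}\, \operatorname{curl}\, \rlderiiv{1-\alpha}{1}$. For $p=2$, identifying $2$-forms with vector fields and $3$-forms with scalar functions gives $d_2 = \operatorname{div}$ and hence $\operatorname{div}_T^\alpha = \fraciint{1-\alpha}{3}\, \operatorname{div}\, \rlderiiv{1-\alpha}{2}$.

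There is essentially no obstacle here: the work has already been done in Theorem~\ref{thm:FDEC-alter} and Proposition~\ref{prop:equivalence}, and the corollary is a straightforward translation from the exterior-calculus notation to the vector-calculus notation in 3D. The only point worth mentioning explicitly is the $p=0$ base case, where the operator $\rlderiiv{1-\alpha}{0}$ drops out by its defining convention, which is why the first identity contains no trailing derivative factor unlike the other two.
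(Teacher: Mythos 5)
Your proposal is correct and matches the paper's own (implicit) argument: the corollary is stated there as an immediate consequence of Proposition~\ref{prop:equivalence} and Theorem~\ref{thm:FDEC-alter}, exactly the chaining you describe, including the $p=0$ convention $\rlderiiv{1-\alpha}{0}=\operatorname{id}$ that removes the trailing factor in the gradient formula.
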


\subsection{Definition of FDEC by discretization} \label{sec:discretization}
In this section, we discretize the fractional exterior derivatives $d_p^\alpha$ in 3D to produce the corresponding FDEC operators $\mathbb{D}_p^\alpha$, $p=0,1,2$. Our discrete exterior derivative is defined as the composition of the discretizations of each of the three composite operators in our reformulated fractional exterior derivative, $d_p^\alpha = \fraciint{1-\alpha}{p+1} \, d_p \, \rlderiiv{1-\alpha}{p}$.
First, as explained in Section~\ref{prelim:DEC}, the discretization of $d_p$ is $\mathbb{D}_p$. Next we need to discretize $\fraciint{\beta}{p} : \Lambda^p \to \Lambda^p $ and $\rlderiiv{\beta}{p} : \Lambda^p \to \Lambda^p$. Naturally, their discretizations should map $p$-cochains to $p$-cochains, i.e., matrices of size $n_p \times n_p$. Furthermore, noting that $\rlderiiv{\beta}{p} \, \fraciint{\beta}{p} = \operatorname{id}$, we use a matrix $\dfraciint{\beta}{p} \in \mathbb{R}^{n_p \times n_p}: C^p \to C^p$ (which will be defined later) as the discretization of $\fraciint{\beta}{p}$, while $\rlderiiv{\beta}{p} $ is discretized by $\left(\dfraciint{\beta}{p}\right)^{-1} \in \mathbb{R}^{n_p \times n_p}: C^p \to C^p$ in order to enforce the structure-preserving property. The resultant discrete exterior derivative is hence defined as $\mathbb{D}_p^\alpha = \dfraciint{1-\alpha}{p+1} \, \mathbb{D}_p \, (\dfraciint{1-\alpha}{p})^{-1}$.

Our discretization was done on a 3D regular cubical complex as shown in Figure~\ref{fig:3D-cubic}. It is obtained by dividing the parallelepiped that the discretized $d_p^\alpha$ operators are defined on, $\Omega = [x_{\min},x_{\max}] \times [y_{\min},y_{\max}] \times [z_{\min},z_{\max}]$, into cuboids. Specifically, the division is $x_{\min} = x_1 < x_2 \cdots < x_{n_x+1} = x_{\max}$, $y_{\min} = y_1 < y_2 \cdots < y_{n_y+1} = y_{\max}$, and $z_{\min} = z_1 < z_2 \cdots < z_{n_z+1} = z_{\max}$, respectively (see Figure~\ref{fig:3D-cubic}).

\begin{figure}[htbp]
	\centering
\includegraphics[width=0.6\linewidth]{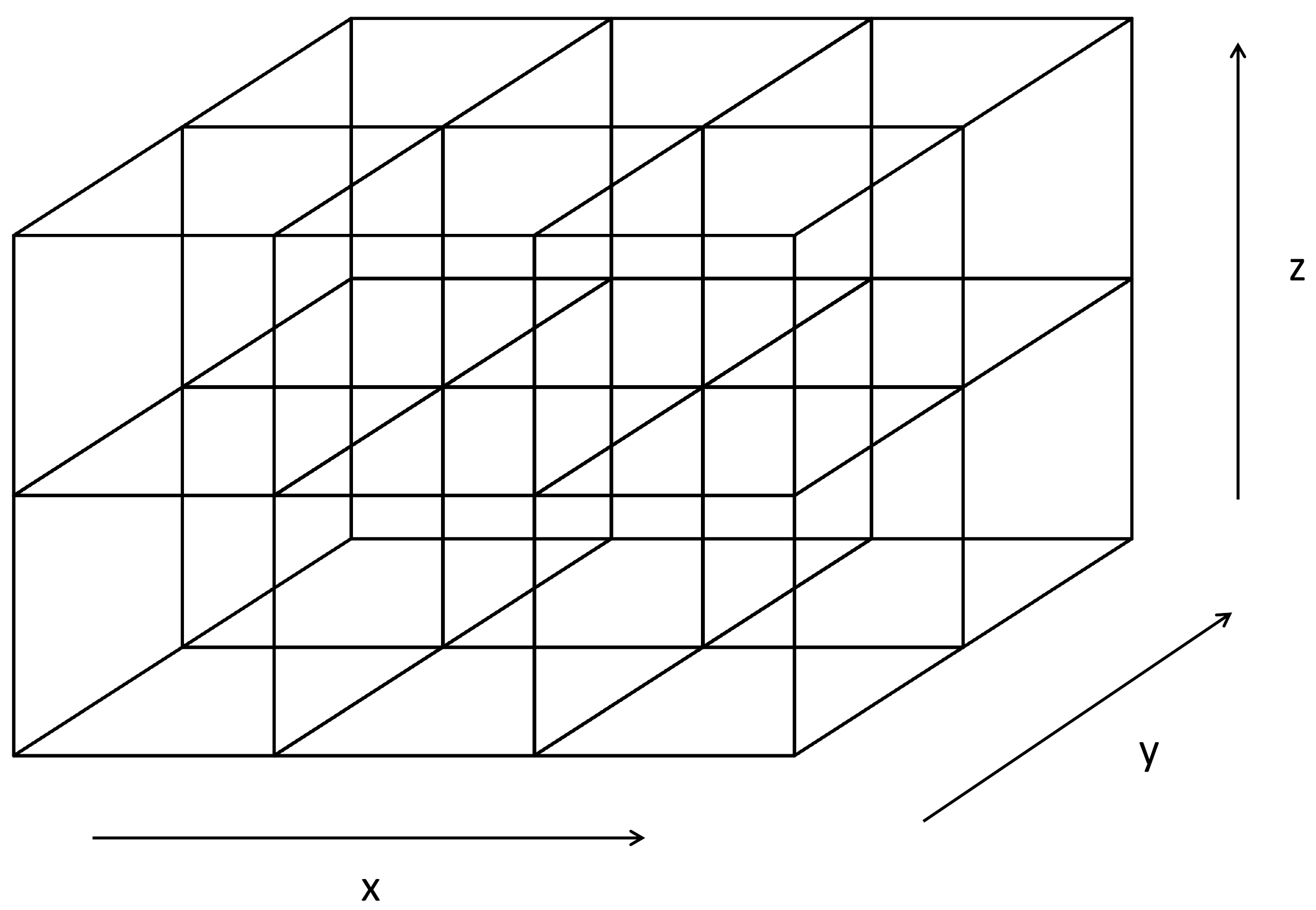}
	\caption{Diagram of a 3D regular cubical complex, with $n_x=3$, $n_y=2$, $n_z=2$}\label{fig:3D-cubic}
\end{figure}
We only present the discretization on the cubical complex because, while constructing the $\mathbb{D}_p$ matrices is straightforward for general meshes, defining the $\dfraciint{\beta}{p}$ matrices on a general mesh is difficult and challenging for practical implementation. 
In particular, computing the entries in this matrix requires computing multiple fractional integrals through multiple mesh elements, which makes efficient implementation difficult in practice although it is theoretically feasible. This computation is much easier, however, when a square or cube grid is used. We would like to point out that, such a difficulty also arises in other types of discretizations for fractional derivatives and, to the best of our knowledge, finding an efficient method for computing those fractional integrals on general meshes is still an open question in the community. 

We will first discuss the strategy of discretizing $\fraciint{\beta}{p}$ on the cubical complex. We will then introduce two FDEC operators: the fractional discrete exterior derivative $\mathbb{D}_p^\alpha$, $p=0,1,2$, and the fractional de Rham map, $\mathcal{R}_p^\alpha$, $p=0,1,2,3$, and discuss their properties. 

\subsubsection{Discretization of $\fraciint{\beta}{p}$}
Next, we consider a discretization of the fractional integral operator $\fraciint{\beta}{p}$, $p=1,2,3$. We use $\fraciint{\beta}{1}$ as an example to illustrate the general procedure for discretization of $\fraciint{\beta}{p}$. 

On the cubical complex, there are $n_{1,x}$, $n_{1,y}$, and $n_{1,z}$ ($n_{1,x} + n_{1,y} + n_{1,z} = n_1$) edges are along the $x$, $y$, and $z$ direction, respectively. Considering an $x$ direction edge, $\sigma^{1,x}_{i,j,k} = [x_i,x_{i+1}] \times y_j \times z_k$, for any 1-form $\omega^1 = f_x dx + f_y dy + f_z dz$, we apply $\fraciint{\beta}{1}$ and take the integral along $\sigma^{1,x}_{i,j,k}$ to obtain,
\begin{align}
\int_{\sigma^{1,x}_{i,j,k}} \fraciint{\beta}{1} \omega^1 &= \fracint{1}{x_i}{x_{i+1}}[x'] \, \fracint{\beta}{x_1}{x',x}[x''] f_x \nonumber \\[-10pt] 
	&= \fracint{1}{x_1}{x_{i+1}}[x'] \, \fracint{\beta}{x_1}{x',x}[x''] f_x
	- \fracint{1}{x_1}{x_i}[x'] \, \fracint{\beta}{x_1}{x',x}[x''] f_x \nonumber \\
	&= \fracint{1+\beta}{x_1}{x_{i+1},x}[x'] f_x
	- \fracint{1+\beta}{x_1}{x_i,x}[x'] f_x. \label{eqn:int_I_1^beta}
\end{align}
In the last step, we used fractional integrals' semigroup property (Lemma~\ref{lem:semigroup}). \eqref{eqn:int_I_1^beta} implies that the above integral can be computed by a signed sum of the values of the $x$ component of $\fraciint{1+\beta}{1} \omega^1$ on the edge's two incident nodes. If we write the edge-node incidence matrix $\mathbb{D}_{0\to1} = \mathbb{D}_0$ into a block form according to edges along each direction, i.e., 
\begin{equation} \label{eqn:D0-block-form}
\mathbb{D}_0 = \begin{pmatrix}
	\mathbb{B}_{1,x}\\ 
	\mathbb{B}_{1,y} \\
	\mathbb{B}_{1,z}
\end{pmatrix}.
\end{equation}
Then such a signed sum can be encoded into the matrix $\mathbb{B}_{1,x}$.

The main step of our discretization is to compute $\fracint{1+\beta}{x_1}{x_i,x}[x'] f_x$ approximately at the node $\sigma^0_{i,j,k}$ by using a piecewise constant approximation on each edge. Specifically, on edge $\sigma^{1,x}_{i',j,k}$, we use the average $\overline{\omega^1} := |\sigma^{1,x}_{i',j,k}|^{-1} \int_{\sigma^{1,x}_{i',j,k}} \omega^1 = |\sigma^{1,x}_{i',j,k}|^{-1}\mathbf{c}^1(\sigma^{1,x}_{i',j,k})$, where $\mathbf{c}^1 = \mathcal{R}_1 \omega^1$, to be the constant approximation of $\omega^1$, and then have
\begin{align*}
	\fracint{1+\beta}{x_1}{x_i,x}[x'] f_x &= \frac{1}{\Gamma(1+\beta)} \int_{x_1}^{x_i} \frac{1}{(x_i - x')^{-\beta}} f_x \, dx' \\
	& = \sum_{i'=1}^{i-1} \frac{1}{\Gamma(1+\beta)} \int_{x_{i'}}^{x_{i'+1}} \frac{1}{(x_{i} - x')^{-\beta}}f_x \, dx' \\
	& \approx \sum_{i'=1}^{i-1}  \frac{1}{\Gamma(1+\beta)} \int_{x_{i'}}^{x_{i'+1}} \frac{1}{(x_{i} - x')^{-\beta}} \overline{\omega^1} \, dx \\
	& = \sum_{i'=1}^{i-1} \frac{\mathbf{c}^1(\sigma^{1,x}_{i',j,k})}{|\sigma^{1,x}_{i',j,k}|} \frac{(x_i - x_{i'})^{1+\beta} - (x_i - x_{i'+1})^{1+\beta}}{\Gamma(2+\beta)}.
\end{align*}
Thus, if we let $C^1_x$ be the space of $1$-cochains that uses $1$-cells along the $x$ direction only, then the matrix form of the discrete Riemann-Liouville fractional integral operator of the $x$ component, $\dfraciint{\beta}{1,x} \in \mathbb{R}^{n_{1,x} \times n_{1,x}}: C^1_x \to C^1_x$, is defined as 
\begin{equation*}
\dfraciint{\beta}{1,x} : = \mathbb{B}_{1,x} \mathbb{M}_{1,x}^{1+\beta} \mathbb{V}_{1,x}^{-1}.
\end{equation*}
Here $\mathbb{B}_{1,x} \in \mathbb{R}^{n_{1,x}\times n_0}$ is defined in~\eqref{eqn:D0-block-form}, $\mathbb{V}_{1,x} = \operatorname{diag}(|\sigma^{1,x}_{i,j,k}|) \in \mathbb{R}^{n_{1,x} \times n_{1,x}}$, and $\mathbb{M}_{1,x}^{1+\beta} := M_{1,x}^{1+\beta} \otimes I_{n_y+1} \otimes I_{n_z+1} \in \mathbb{R}^{n_0 \times n_{1,x}}$ 
where $\otimes$ denotes the standard Kronecker product, $I_{n}$ denotes a $n \times n$ identity matrix, and $M_{1,x}^{1+\beta} \in \mathbb{R}^{(n_x+1) \times n_x}$ is defined as follows,
\begin{equation*}
\left(M_{1,x}^{1+\beta}\right)_{i,i'} = 
\begin{cases}
		\displaystyle
	\frac{(x_i - x_{i'})^{1+\beta} - (x_i - x_{i'+1})^{1+\beta}}{\Gamma(2+\beta)}, & \text{if} \ i' < i, \\
	0,  &\text{otherwise}.
\end{cases}
\end{equation*} 
Repeating the same procedure for a $y$ direction edge, $\sigma^{1,y}_{i,j,k} = x_i \times [y_j, y_{j+1}] \times z_k$, and a $z$ direction edge, $\sigma^{1,z}_{i,j,k} = x_i \times y_j \times [z_k, z_{k+1}]$, we can define the discrete Riemann-Liouville fractional integral operator of the $y$ and $z$ component, $\dfraciint{\beta}{1,y} $ and $\dfraciint{\beta}{1,z} $, as follows,
\begin{equation*}
\dfraciint{\beta}{1,y} : = \mathbb{B}_{1,y} \, \mathbb{M}_{1,y}^{1+\beta} \, \mathbb{V}_{1,y}^{-1} \quad \text{and} \quad \dfraciint{\beta}{1,z} : = \mathbb{B}_{1,z} \, \mathbb{M}_{1,z}^{1+\beta} \, \mathbb{V}_{1,z}^{-1}.	
\end{equation*}
where $\mathbb{B}_{1,y} \in \mathbb{R}^{n_{1,y}\times n_0}$ and $\mathbb{B}_{1,z} \in \mathbb{R}^{n_{1,z}\times n_0}$ are defined in \eqref{eqn:D0-block-form}, $\mathbb{V}_{1,y} = \operatorname{diag}(|\sigma^{1,y}_{i,j,k}|) \in \mathbb{R}^{n_{1,y} \times n_{1,y}}$, and $\mathbb{V}_{1,z} = \operatorname{diag}(|\sigma^{1,z}_{i,j,k}|) \in \mathbb{R}^{n_{1,z} \times n_{1,z}}$. We also similarly define $\mathbb{M}_{1,y}^{1+\beta} :=  I_{n_x+1} \otimes  M_{1,y}^{1+\beta} \otimes I_{n_z+1} \in \mathbb{R}^{n_0 \times n_{1,y}}$ where
\begin{equation*}
	\left(M_{1,y}^{1+\beta}\right)_{j,j'} = 
	\begin{cases}
		\displaystyle
		\frac{(y_j - y_{j'})^{1+\beta} - (y_j - y_{j'+1})^{1+\beta}}{\Gamma(2+\beta)}, & \text{if} \ j' < j, \\
		0,  &\text{otherwise},
	\end{cases}
\end{equation*} 
and $\mathbb{M}_{1,z}^{1+\beta} := I_{n_x+1} \otimes I_{n_y+1}\otimes M_{1,z}^{1+\beta}   \in \mathbb{R}^{n_0 \times n_{1,z}}$ where
\begin{equation*}
	\left(M_{1,z}^{1+\beta}\right)_{k,k'} = 
	\begin{cases}
		\displaystyle
		\frac{(z_k - z_{k'})^{1+\beta} - (z_k - z_{k'+1})^{1+\beta}}{\Gamma(2+\beta)}, & \text{if} \ k' < k, \\
		0,  &\text{otherwise}.
	\end{cases}
\end{equation*} 
Finally, combining $\dfraciint{\beta}{1,x}$, $\dfraciint{\beta}{1,y}$, and $\dfraciint{\beta}{1,z}$, we define the overall discrete Riemann-Liouville fractional 1-form integral operator $\dfraciint{\beta}{1} \in \mathbb{R}^{n_1 \times n_1}: C^1 \to C^1$ as follows,
\begin{equation*}
	\dfraciint{\beta}{1} := \mathbb{B}_1 \, \mathbb{M}_1^{1+\beta} \, \mathbb{V}_1^{-1} = \operatorname{diag}(\dfraciint{\beta}{1,x}, \dfraciint{\beta}{1,y}, \dfraciint{\beta}{1,z}),
\end{equation*}
where $\mathbb{B}_{1} = \operatorname{diag}(\mathbb{B}_{1,x}, \mathbb{B}_{1,y}, \mathbb{B}_{1,z})$, $\mathbb{M}_1^{1+\beta} = \operatorname{diag}(\mathbb{M}_{1,x}^{1+\beta},\mathbb{M}_{1,y}^{1+\beta},\mathbb{M}_{1,z}^{1+\beta})$, and \\
$\mathbb{V}_1 = \operatorname{diag}(\mathbb{V}_{1,x}, \mathbb{V}_{1,y}, \mathbb{V}_{1,z})$.

Analogously, to discretize $\mathcal{I}_2^\beta$, we notice that there are $n_{2,yz}$ faces parallel to the $yz$ plane, $n_{2,xz}$ faces parallel to the $xz$ plane, and $n_{2,xy}$ faces parallel to the $xy$ plane. So we expect the discrete fractional integral $\dfraciint{\beta}{2}$ to also have a diagonal block form. To discretize $\fraciint{\beta}{2}$, for any $2$-form $\omega^2$, we apply $\fraciint{\beta}{2}$ and then take integration on a face. Using the average value of $\omega^2$ on the face as a constant approximation, we can compute approximately the face integral, leading to the discrete fractional 2-form integral operator. We omit the details here and directly present $\dfraciint{\beta}{2}$. First, we need the signed face-node incidence matrix $\mathbb{D}_{0 \to 2} \in \mathbb{R}^{n_2 \times n_0}$, which has the block form  
$$
\mathbb{D}_{0 \to 2} = 
\begin{pmatrix} 
\mathbb{B}_{2,yz} \\
\mathbb{B}_{2,xz} \\
\mathbb{B}_{2,xy}
\end{pmatrix}.
$$
Then we need matrices $\mathbb{M}_{2,yz}^{1+\beta}  \in \mathbb{R}^{n_0 \times n_{2,yz}}, \mathbb{M}_{2,xz}^{1+\beta}  \in \mathbb{R}^{n_0 \times n_{2,xz}}, \mathbb{M}_{2,xy}^{1+\beta} \in \mathbb{R}^{n_0 \times n_{2,xy}}$, which are the discretizations of approximately evaluating the $dy \wedge dz$, $dz \wedge dx$, and $dx \wedge dy$ components of $\fraciint{1+\beta}{2} \omega^2$ at each node, respectively. The overall discrete Riemann-Liouville fractional integral operator $\dfraciint{\beta}{2} \in \mathbb{R}^{n_2 \times n_2}: C^2 \to C^2$ is defined as
\begin{equation*}
	\dfraciint{\beta}{2}	:= \mathbb{B}_2 \, \mathbb{M}_2^{1+\beta} \, \mathbb{V}_2^{-1},
\end{equation*}
where $\mathbb{B}_{2} = \operatorname{diag}(\mathbb{B}_{2,yz}, \mathbb{B}_{2,xz}, \mathbb{B}_{2,xy})$, $\mathbb{M}_2^{1+\beta} = \operatorname{diag}(\mathbb{M}_{2,yz}^{1+\beta},\mathbb{M}_{2,xz}^{1+\beta},\mathbb{M}_{2,xy}^{1+\beta})$, and \\
$\mathbb{V}_2 = \operatorname{diag}(\operatorname{diag}(|\sigma^{2,yz}_{i,j,k}|),\operatorname{diag}(|\sigma^{2,xz}_{i,j,k}|),\operatorname{diag}(|\sigma^{2,xy}_{i,j,k}|))$.

Finally, to discretize $\mathcal{I}_3^\beta$, note that there is only one type of volume (cuboids) in the cubical complex. We no longer expect the discrete fractional integral $\dfraciint{\beta}{3}$ to have a diagonal block form. Similarly, to discretize $\fraciint{\beta}{3}$, for any $3$-form $\omega^3$, we apply $\fraciint{\beta}{3}$ and then take a volume integration. Using the average value of $\omega^3$ on the volume as a constant approximation, we can compute approximately the volume integral, leading to the discrete fractional integral on 3-forms operator. Letting $\mathbb{M}_3^{1+\beta} \in \mathbb{R}^{n_0 \times n_3}$ be the discretization of evaluating $\fraciint{1+\beta}{3} \omega^3$ at each node, we define the discrete Riemann-Liouville fractional integral operator $\dfraciint{\beta}{3} \in \mathbb{R}^{n_3 \times n_3}: C^3 \to C^3$ as
\begin{equation*}
	\dfraciint{\beta}{3}	:= \mathbb{B}_3 \, \mathbb{M}_3^{1+\beta} \, \mathbb{V}_3^{-1},
\end{equation*}
where $\mathbb{B}_3:= \mathbb{D}_{0\to3} \in \mathbb{R}^{n_3 \times n_0}$ is the signed volume-node incidence matrix and $\mathbb{V}_3 = \operatorname{diag}(|\sigma^3_{i,j,k}|)$.

In general, our discrete Riemann-Liouville fractional integral operators are defined as 
\begin{equation*}
	\dfraciint{\beta}{p} = \mathbb{B}_p \, \mathbb{M}_p^{1+\beta} \, \mathbb{V}_p^{-1}, \quad p =1, 2, 3.
\end{equation*}
We summarize the sizes and number of nonzeros of $\mathbb{B}_p$ and $\mathbb{M}_p^{1+\beta}$ as a function of $n$, $n = n_x = n_y = n_z$, in Table~\ref{table:sparsity-B-M}. We exclude $\mathbb{V}_p$ because they are diagonal. As we can see, on the cubical complex, the matrices $\mathbb{B}_p$ are sparse since they are constructed from incidence matrices. On the other hand, the matrices $\mathbb{M}_p^{1+\beta}$ become denser and denser as $p$ increases. However, they are still relatively sparse. Even for $p=3$, only $12.5\%$ of the entries are nonzeros. 

\begin{table}
	\centering
	\caption{Sizes and number of nonzeros of the $\mathbb{B}_p$ and $\mathbb{M}_p^{1+\beta}$ matrices as function of $n$, $n = n_x =n_y=n_z$.
		($n_0 = (n+1)^3$, $n_1 = 3n(n+1)^2$, $n_2 = 3n^2(n+1)$, and $n_3 = n^3$)}
	\label{table:sparsity-B-M}
	\begin{tabular}{lllll} \toprule
		& \multicolumn{2}{c}{size}              & \multicolumn{2}{c}{number of nonzeros}                                     \\ \cmidrule(r){2-3} \cmidrule(r){4-5}
		$p$ & $\mathbb{B}_p$             & $\mathbb{M}_p^{1+\beta}$       & $\mathbb{B}_p$  & $\mathbb{M}_p^{1+\beta}$                                            \\ \midrule
		1   & $n_1 \times 3n_0$ & $3n_0 \times n_1$ & $2n_1$ & $\frac{3}{2} n(n+1)^3$            \\[3pt]
		2   & $n_2 \times 3n_0$ & $3n_0 \times n_2$ & $4n_2$ & $\frac{3}{4} n^2(n+1)^3 $   \\[3pt]
		3   & $n_3 \times n_0$  & $n_0 \times n_3$  & $8n_3$ & $\frac{1}{8} n^3 (n+1)^3$                  
		\\ \bottomrule
	\end{tabular}
\end{table}

\subsubsection{FDEC operators}
After discretizing $\fraciint{\beta}{p}$, we now introduce two FDEC operators: the fractional discrete exterior derivative, and the fractional de Rham map. We first introduce the fractional discrete exterior derivative. As mentioned before, based on $\rlderiiv{\beta}{p} \, \fraciint{\beta}{p} = \operatorname{id}$ on the continuous level, we simply define the discrete version of the $\rlderiiv{\beta}{p}$ operator as the inverse of the $\dfraciint{\beta}{p}$ matrix to preserve the property on the discrete level. Based on the discrete versions of $\fraciint{\beta}{p}$, $\rlderiiv{\beta}{p}$, and $d_p$, i.e., $\dfraciint{\beta}{p}$, $\left( \dfraciint{\beta}{p} \right)^{-1}$, and $\mathbb{D}_p$, it is straightforward to define the FDEC operators $\mathbb{D}_p^\alpha$ as follows.
\begin{definition}\label{def:FDEC-operators}
	The fractional discrete exterior derivative operators are defined as
	\[ \mathbb{D}_p^\alpha := \dfraciint{1-\alpha}{p+1} \, \mathbb{D}_p \, (\dfraciint{1-\alpha}{p})^{-1}, \quad p=0,1,2, \ 0<\alpha<1,
	\]
where we define $\dfraciint{1-\alpha}{0} := I_{n_0}$, i.e., an identify matrix of size $n_0 \times n_0$.
\end{definition}

From Definition~\ref{def:FDEC-operators}, we can easily see that $\mathbb{D}_{p+1}^\alpha \mathbb{D}_p^\alpha = 0$ because 
\begin{align*}
\mathbb{D}_{p+1}^\alpha \, \mathbb{D}_p^\alpha = \dfraciint{1-\alpha}{p+2} \,  \mathbb{D}_{p+1} \, (\dfraciint{1-\alpha}{p+1} )^{-1}  \, \dfraciint{1-\alpha}{p+1}  \mathbb{D}_p \,  (\dfraciint{1-\alpha}{p})^{-1} = \dfraciint{1-\alpha}{p+2} \,  \mathbb{D}_{p+1} \, \mathbb{D}_p \, (\dfraciint{1-\alpha}{p})^{-1} = 0. \label{eqn:FDEC-exact}
\end{align*}

\begin{remark}
We can discretize $\rlderiiv{\beta}{p}$ directly using the same method as was done to discretize $\fraciint{\beta}{p}$ and obtain a discrete Riemann-Liouville fractional derivative operator,
\[
\drlderiiv{\beta}{p} = \mathbb{B}_p \, \mathbb{M}_p^{1-\beta} \, \left(\mathbb{V}_p \right)^{-1}, \quad p=0,1, \quad 0<\beta<1.
\]
This provides two alternative ways to define the FDEC operators. The first one is 
\begin{equation*}
	\mathbb{D}_p^\alpha : = \dfraciint{1-\alpha}{p+1} \, \mathbb{D}_p \, \drlderiiv{1-\alpha}{p}.
\end{equation*}
Although this approach seems to be more natural than Definition~\ref{def:FDEC-operators}, unfortunately,  these operators do not satisfy $\mathbb{D}_{p+1}^\alpha \, \mathbb{D}_p^\alpha = 0$ due to the fact that $\drlderiiv{\beta}{p} \, \dfraciint{\beta}{p} \neq I_{n_p}$. But $\drlderiiv{\beta}{p} \, \dfraciint{\beta}{p} \approx I_{n_p}$ as the mesh size gets smaller, which leads to $\mathbb{D}_{p+1}^\alpha \, \mathbb{D}_p^\alpha \approx 0$ when the mesh is refined. The second approach is 
\begin{equation*}
	\mathbb{D}_p^\alpha : = \left( \drlderiiv{1-\alpha}{p+1} \right)^{-1} \, \mathbb{D}_p \, \drlderiiv{1-\alpha}{p},
\end{equation*}
which also satisfies $\mathbb{D}_{p+1}^\alpha \, \mathbb{D}_p^\alpha = 0$.  However, we empirically observe worse convergence than the version we are using. This is why we decided to use Definition~\ref{def:FDEC-operators}. We would like to thoroughly understand these three approaches' approximation properties and convergence behaviors in our future work.  
\end{remark}

\begin{remark}
We comment that the FDEC operators defined in Definition~\ref{def:FDEC-operators} are closely related to the mimetic finite difference (MFD) method \cite{vabishchevich2005finite,da2014mimetic} since $\mathbb{D}_p^{\alpha} = \dfraciint{1-\alpha}{p+1} \, \mathbb{D}_p \, (\dfraciint{1-\alpha}{p})^{-1} = \left( \mathbb{B}_{p+1} \, \mathbb{M}_{p+1}^{2-\alpha} \right) \, \left( \mathbb{V}_{p+1}^{-1} \,  \mathbb{D}_p \, \mathbb{V}_p \right) \, \left( \mathbb{B}_p \mathbb{M}_p^{2-\alpha} \right)^{-1}$ and $\mathbb{V}_{p+1}^{-1} \,  \mathbb{D}_p \, \mathbb{V}_p$ are the MFD operators as pointed out in \cite{rodrigo2015finite,adler2021finite}.
\end{remark}

Next, after defining the fractional discrete exterior derivative, we define a ``fractional de Rham map,'' as follows. Recall that in the integer case, we have the identity $
\mathbb{D}_p \, \mathcal{R}_p = \mathcal{R}_{p+1} \, d_p$. However, in the fractional case, $\mathbb{D}_p^\alpha \mathcal{R}_p \neq \mathcal{R}_{p+1} d_p^\alpha$ in general. To remedy this, we define the following \emph{fractional de Rham map} $\mathcal{R}_p^\alpha: \Lambda^p \to C^p$:
\[
\mathcal{R}_p^\alpha := \dfraciint{1-\alpha}{p} \, \mathcal{R}_p \, \rlderiiv{1-\alpha}{p}, \quad p = 0, 1, 2, 3.
\]
Then $\mathbb{D}_p^\alpha \, \mathcal{R}_p^\alpha = \mathcal{R}_{p+1}^\alpha \, d_p^\alpha$ holds by the following direct calculation:
\begin{align*}
	&\mathbb{D}_p^\alpha \, \mathcal{R}_p^\alpha = \dfraciint{1-\alpha}{p+1} \, \mathbb{D}_p \, (\dfraciint{1-\alpha}{p})^{-1} \, \dfraciint{1-\alpha}{p} \, \mathcal{R}_p \, \rlderiiv{1-\alpha}{p} = \dfraciint{1-\alpha}{p+1} \, \mathbb{D}_p \, \mathcal{R}_p \, \rlderiiv{1-\alpha}{p} \nonumber \\ 
	& \quad = \dfraciint{1-\alpha}{p+1} \, \mathcal{R}_{p+1} \, d_p \, \rlderiiv{1-\alpha}{p} = \dfraciint{1-\alpha}{p+1} \, \mathcal{R}_{p+1} \, \rlderiiv{1-\alpha}{p+1} \, \fraciint{1-\alpha}{p+1} \, d_p \, \rlderiiv{1-\alpha}{p} = \mathcal{R}_{p+1}^\alpha \, d_p^\alpha.  \label{eqn:FDEC-FdeRham-commute}
\end{align*}
This implies the following commuting diagram
\begin{center}
	\begin{tikzcd}
		\Lambda^p \arrow[r, "d^{\alpha}_p"] \arrow[d, "\mathcal{R}^{\alpha}_p"] &  \Lambda^{p+1} \arrow[d, "\mathcal{R}^{\alpha}_{p+1}"] \\
		C^p \arrow[r, "\mathbb{D}^{\alpha}_p"]           &  C^{p+1}
	\end{tikzcd}
\end{center}
and suggests that, with the fractional de Rham map, the FDEC operator $\mathbb{D}_p^{\alpha}$ can be viewed as an ``error-free'' discretization of $d_p^{\alpha}$, which provides another structure-preserving property and, therefore, makes the FDEC operators useful for preserving physics laws in numerical simulations, e.g., fractional conservation of mass \cite{wheatcraft2008fractional} and fractional Gauss's laws \cite{tarasov2008fractional}.

\section{Numerical experiments} \label{sec:numerics}

In this section, we numerically test the FDEC operators. The MATLAB and Mathematica code is available at \url{https://github.com/71c/Frac-DEC}. In Section~\ref{sec:convergence}, we numerically study the convergence of $\mathbb{D}_p^\alpha$ to $d_p^\alpha$, while in Section~\ref{sec:dd0}, the property $\mathbb{D}_{p+1}^\alpha \mathbb{D}_p^\alpha = 0$ is verified numerically.

\subsection{Convergence of our FDEC to T-FVC} \label{sec:convergence}
In this section, we numerically study the approximation property of the FDEC operators (Definition~\ref{def:FDEC-operators}) to their continuous counterparts (Definition~\ref{def:Caputo-FVC-operators}) for $p=0,1,2$. Their convergence rates are tested using a regular cubical complex (see Figure~\ref{fig:3D-cubic}).

Recall that $\mathbb{D}_p^\alpha \mathcal{R}_p \neq \mathcal{R}_{p+1} d_p^\alpha$ in general, however, we do expect that $\mathbb{D}_p^\alpha \mathcal{R}_p \to \mathcal{R}_{p+1} d_p^\alpha$ as the number of the subdivisions increases (i.e., the mesh size decreases). Therefore, to properly check the convergence rate, for a given $p$-form $\omega^p \in \Lambda^p$,
we compute the following root mean square (RMS) error, 
\begin{equation} \label{errorDpalpha}
	\operatorname{RMS}\left(\mathbb{V}_{p+1}^{-1} \mathbb{D}_p^\alpha \mathcal{R}_p \omega^p - \mathbb{V}_{p+1}^{-1} \mathcal{R}_{p+1} d_p^{\alpha} \omega^p\right),
\end{equation}
where $\operatorname{RMS}(\bm{x}) := \sqrt{\frac{1}{n}\sum_{i=1}^n x_i^2}$ for $\bm{x} \in \R^n$. Here $\mathbb{V}_{p+1}^{-1}$ makes sure that we are measuring the average error throughout space by dividing the cochain value on a $(p+1)$-cell by the volume of that $(p+1)$-cell.


In Figures~\ref{fig:convergence_h} and \ref{fig:convergence_alpha},
we plot the error (\ref{errorDpalpha}) against the number of subdivisions and the fractional order $\alpha$, for $p=0,1,2$. We use the region $\Omega=[0,1] \times [0,1] \times [0,1]$ for all plots. We use the scalar field
\[
f(x,y,z) = -8 x y^2 z-3 \cos \left(20 \left(x-1/2\right) (y-1) z\right)+4 \left(x-1/2\right)^2+\left(y-1/2\right)^2
\]
as $\omega^0$ to test the fractional gradient, and use the vector field
\[
\bm{F}(x, y, z) =
\begin{bmatrix}[1.3]
y \sin (5 x y+z)+3 \left(x z-\frac{1}{2}\right)^2-3 \left(y-\frac{1}{2}\right)^2 \\
z \cos (10 x y z)+x z-y^3 \\
2 \sin \left(5 x^3 y\right)+x y \left(z-\frac{1}{4}\right)+\cos (2 x y z)-x+y^3 z
\end{bmatrix}
\]
as $\omega^1$ and $\omega^2$ to test the fractional curl and divergence, respectively.

Figure~\ref{fig:convergence_h} plots the error (\ref{errorDpalpha}) for $p=0,1,2$ against the number of subdivisions for two values of $\alpha$: $\alpha=0.25$ and $\alpha=0.9$. From the plots, we can see that the convergence is generally slower at small $n$, and the convergence becomes faster approaching second-order convergence for large $n$. Thus, we conjecture that our proposed FDEC operators converge in second-order asymptotically. We can also see, at least for the fractional curl and divergence, that for $\alpha = 0.25$, the convergence is close to second-order, while for $\alpha = 0.9$, the convergence is slower than second-order (at least empirically for small $n$).  This likely means that when $\alpha$ is close to $1$, the asymptotic second-order convergence appears more slowly. The theoretical study of the convergence order is a subject of our future work.

\begin{figure}[H]
\centering

\begin{subfigure}{\textwidth}
	\begin{minipage}{.5\textwidth}
		\centering
		\includegraphics[width=\linewidth]{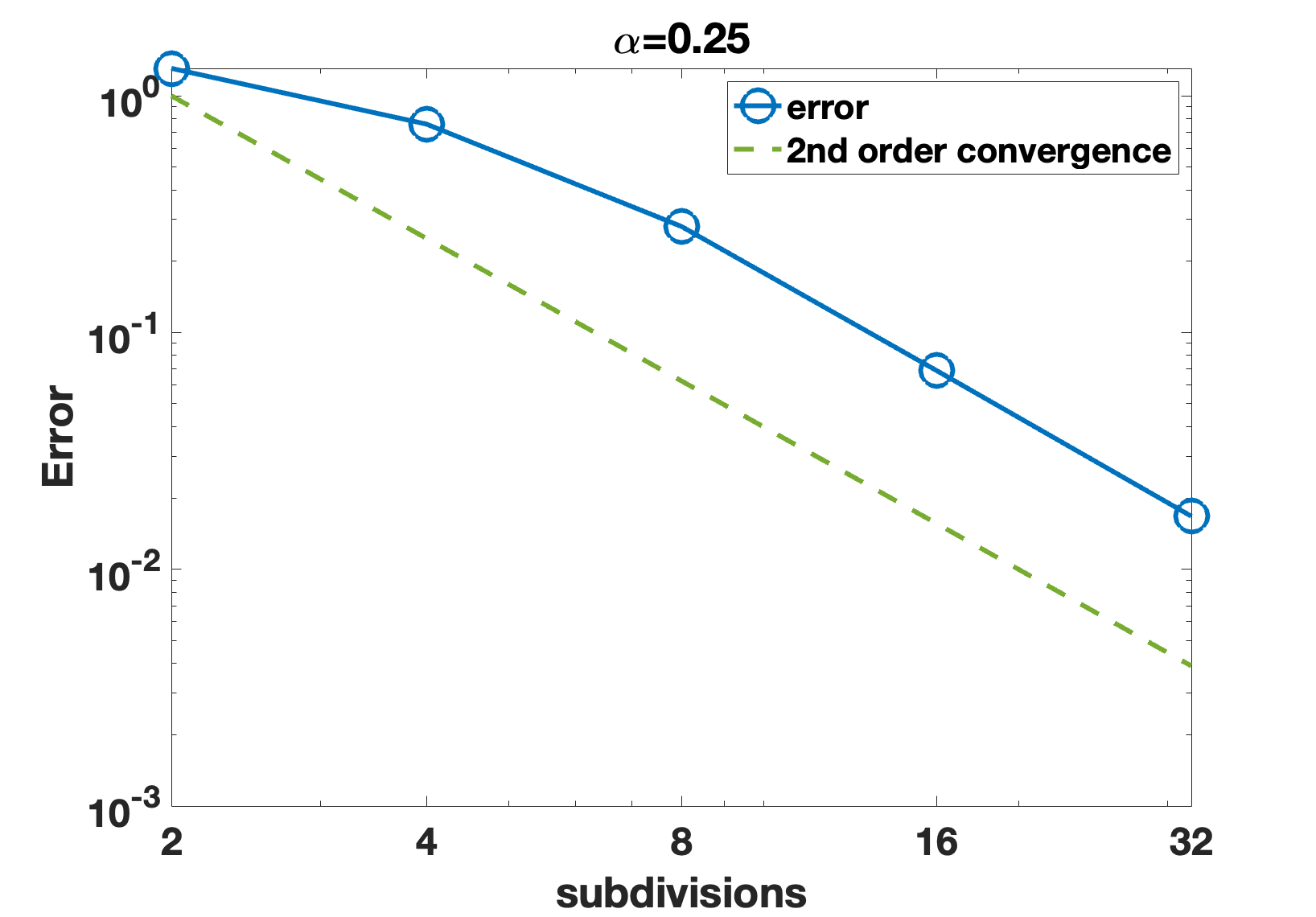}
	\end{minipage}%
	\begin{minipage}{.5\textwidth}
		\centering
		\includegraphics[width=\linewidth]{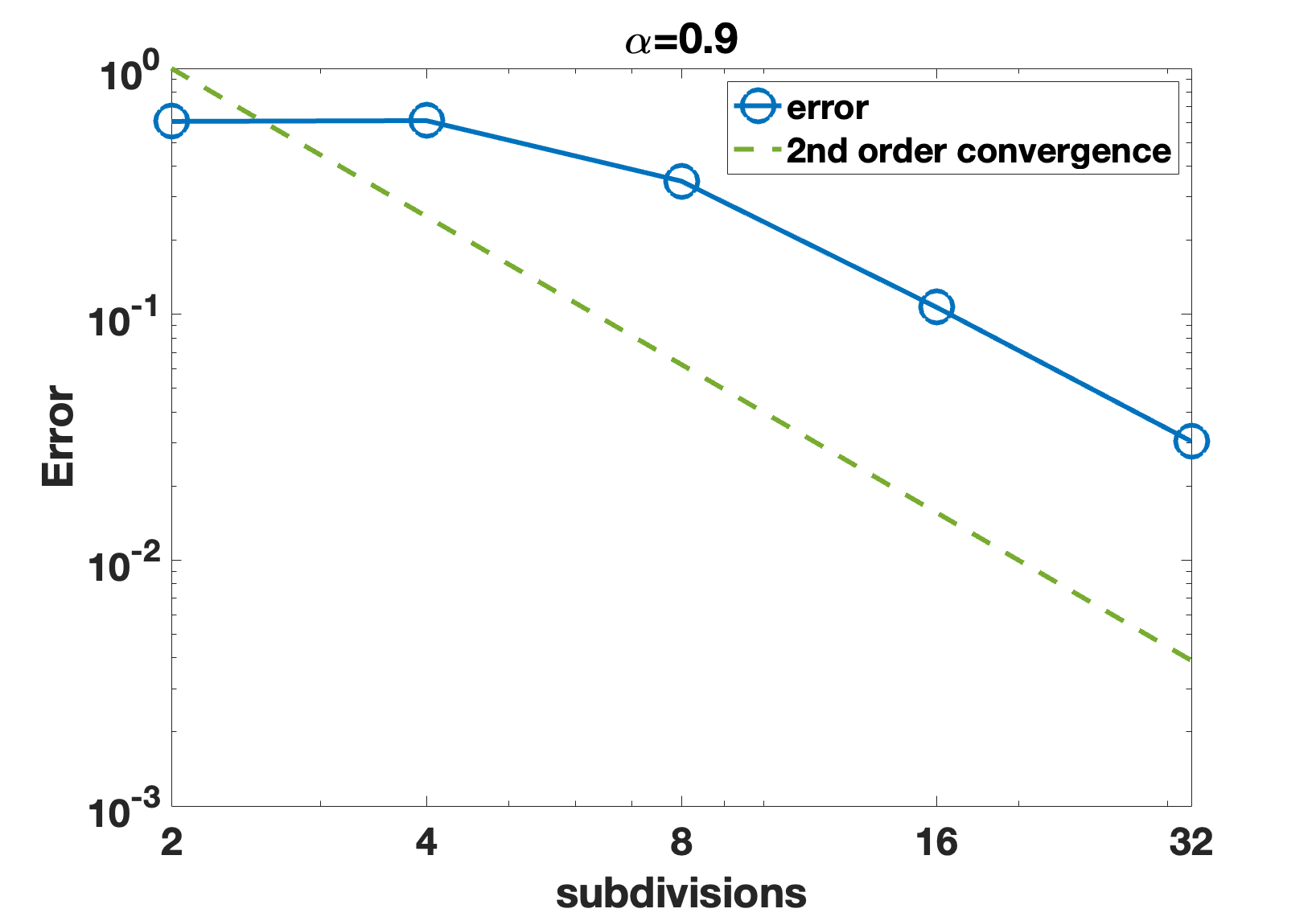}
	\end{minipage}
	\caption{Discrete fractional gradient error for function $f(x,y,z)$}
\end{subfigure}

\begin{subfigure}{\textwidth}
	\begin{minipage}{.5\textwidth}
		\centering
		\includegraphics[width=\linewidth]{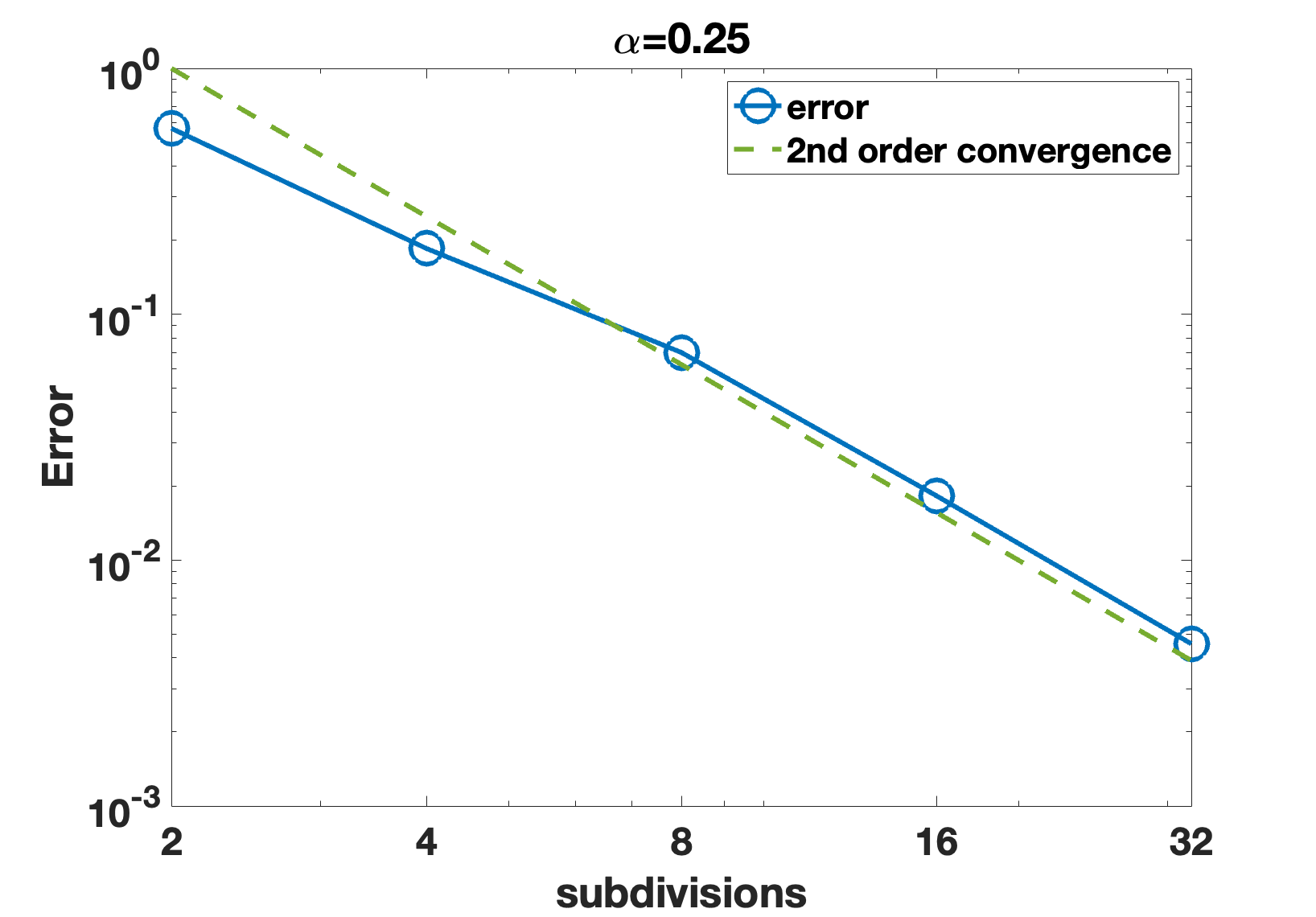}
	\end{minipage}%
	\begin{minipage}{.5\textwidth}
		\centering
		\includegraphics[width=\linewidth]{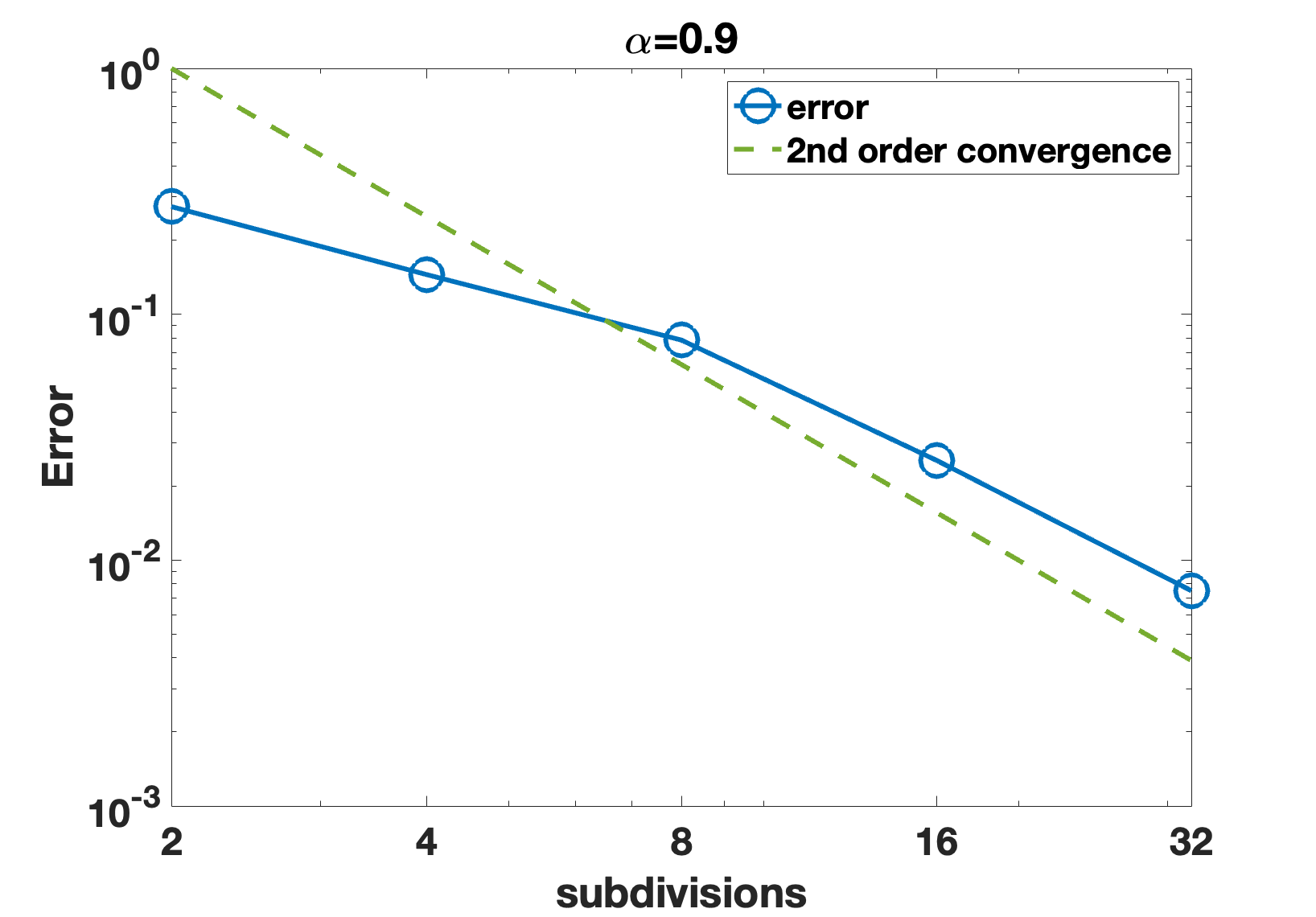}
	\end{minipage}
	\caption{Discrete fractional curl error for function $\bm{F}(x, y, z)$}
\end{subfigure}

\begin{subfigure}{\textwidth}
	\begin{minipage}{.5\textwidth}
		\centering
		\includegraphics[width=\linewidth]{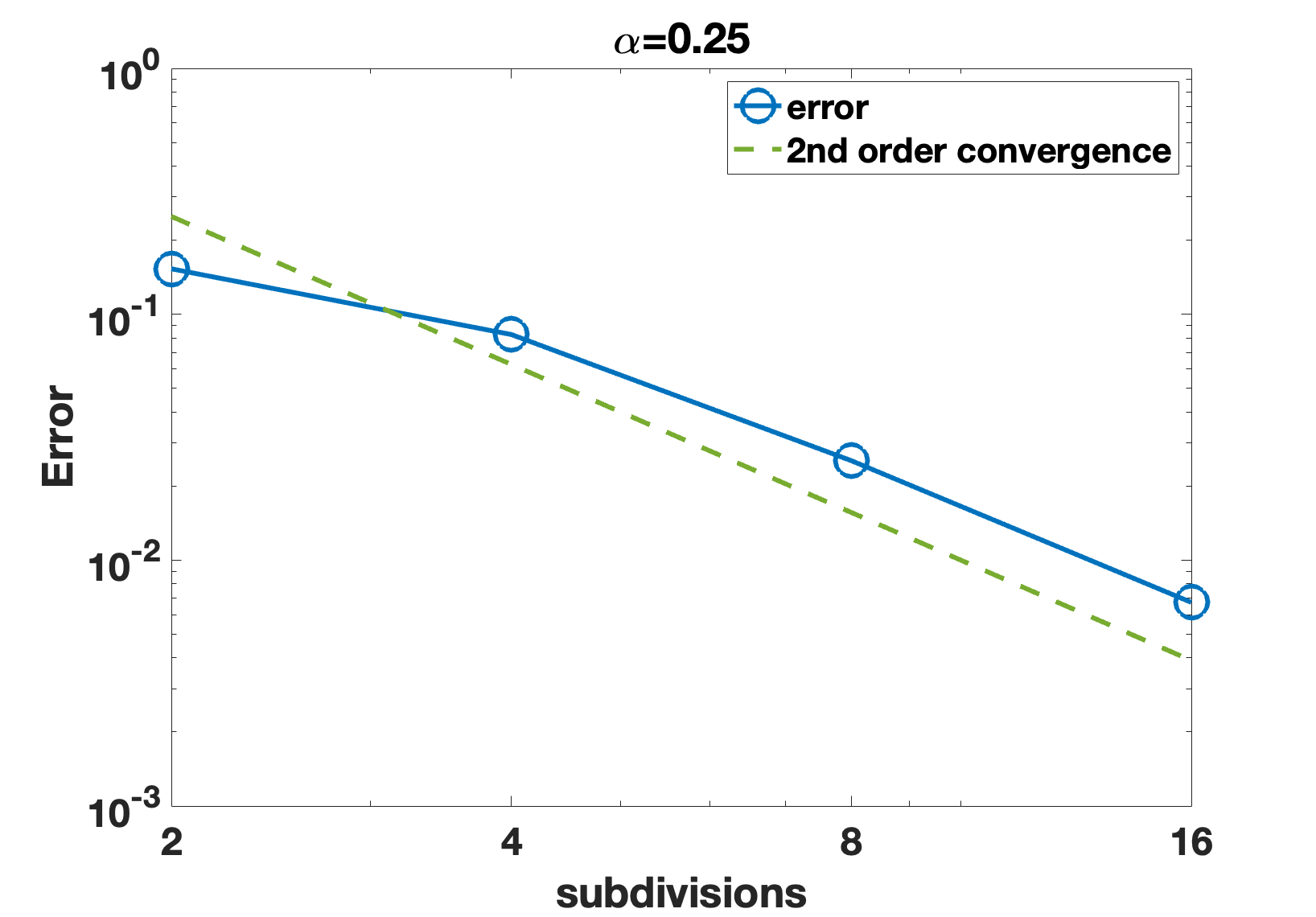}
	\end{minipage}%
	\begin{minipage}{.5\textwidth}
		\centering
		\includegraphics[width=\linewidth]{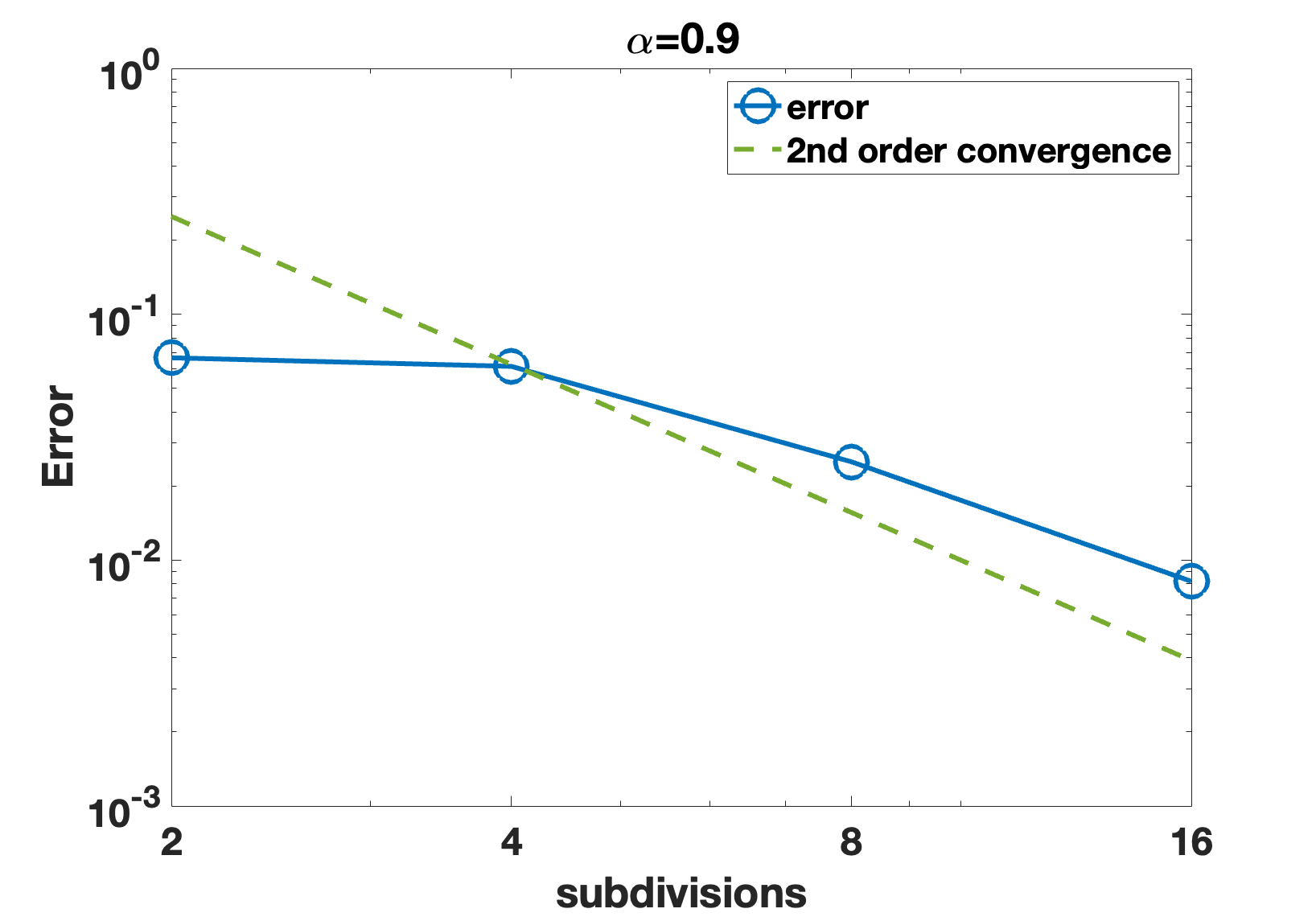}
	\end{minipage}
	\caption{Discrete fractional div error for function $\bm{F}(x, y, z)$}
\end{subfigure}

\caption{Error of discrete fractional gradient, curl, and divergence to their continuous counterparts against the number of subdivisions. Left column: $\alpha=0.25$. Right column: $\alpha=0.9$.} \label{fig:convergence_h}

\end{figure}

\begin{figure}[H]
\centering
\begin{subfigure}{\textwidth}
	\begin{minipage}{.5\textwidth}
		\centering
		\includegraphics[width=\linewidth]{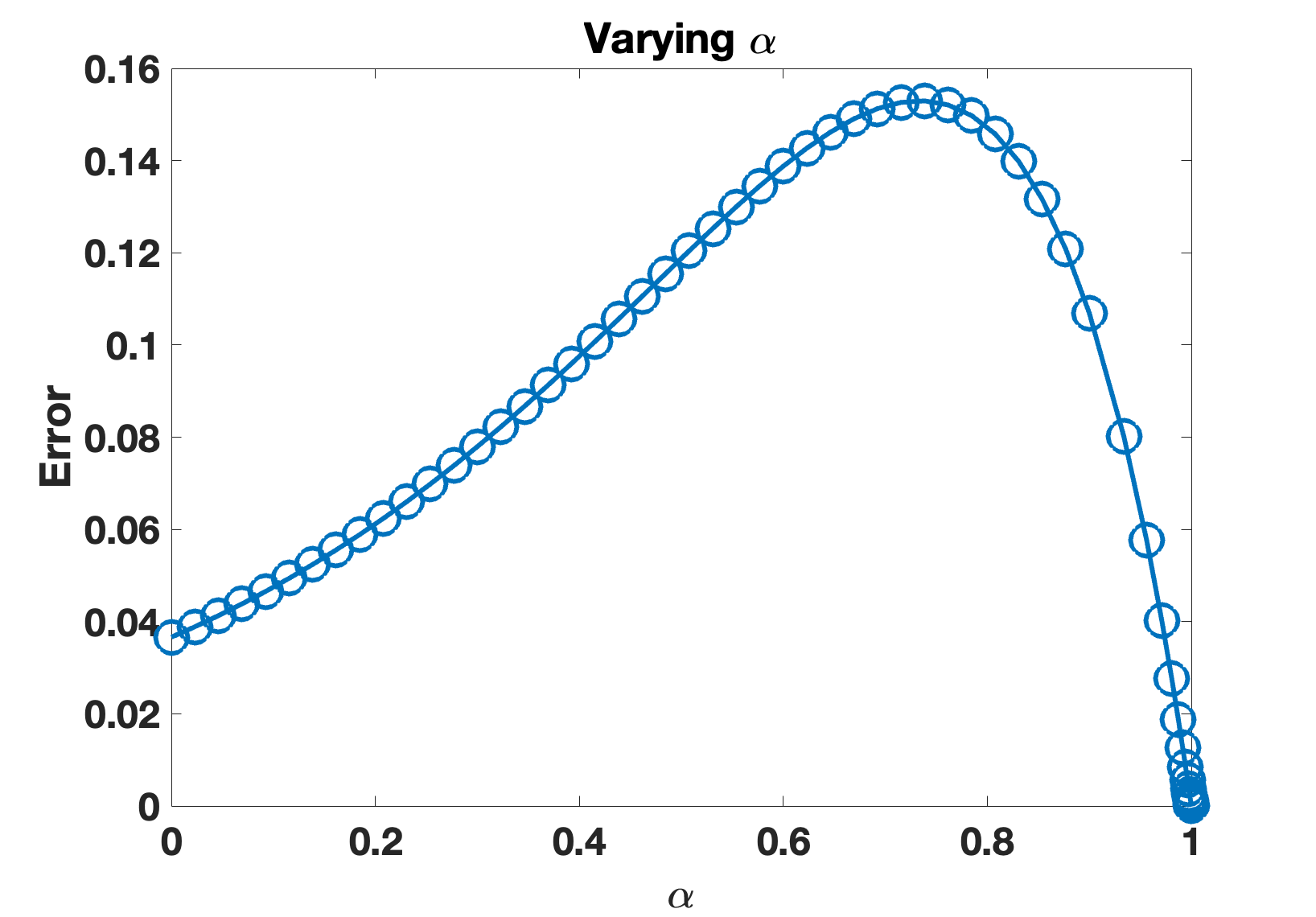}
	\end{minipage}%
	\begin{minipage}{.5\textwidth}
		\centering
		\includegraphics[width=\linewidth]{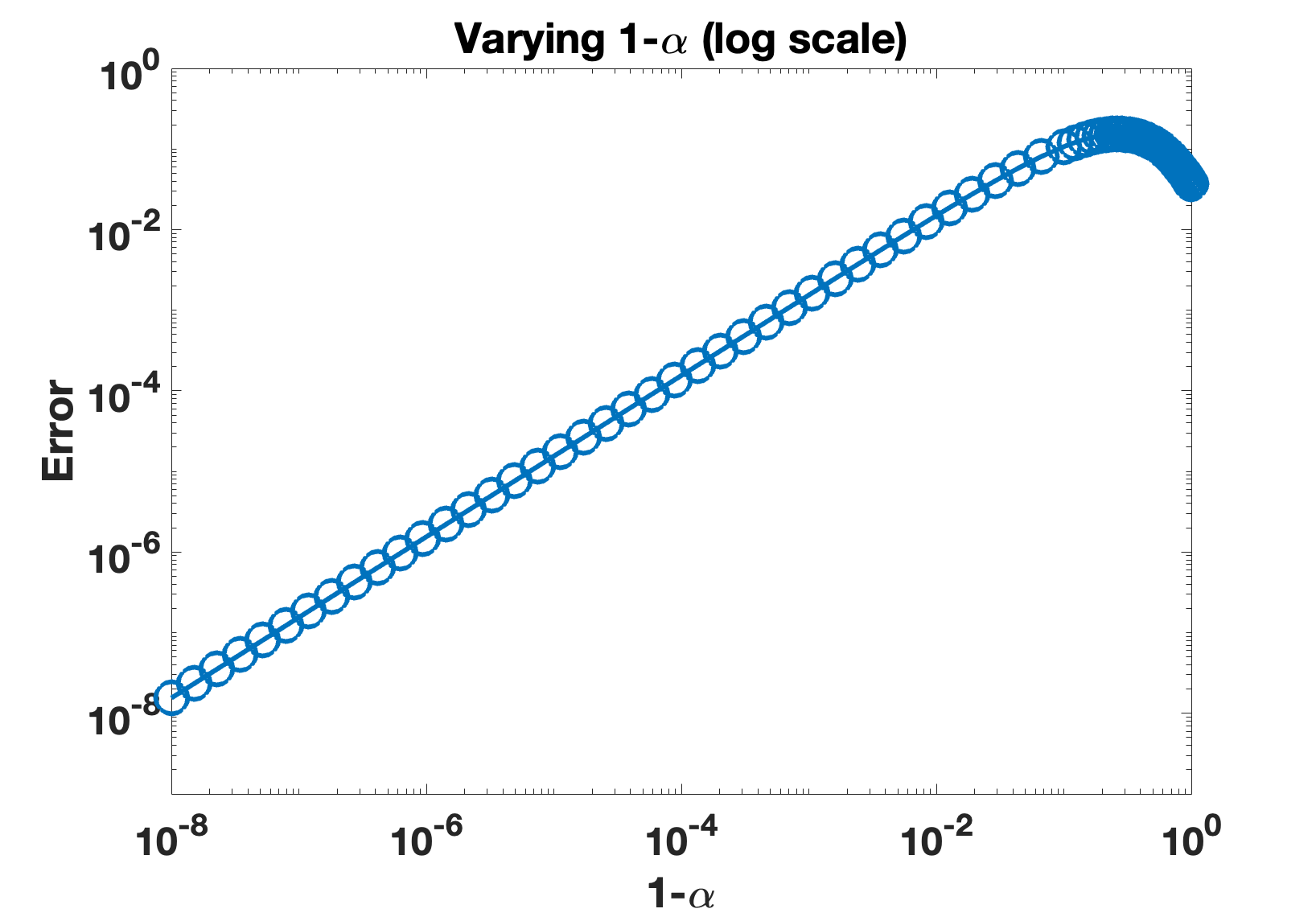}
	\end{minipage}
	\caption{Discrete fractional gradient error for function $f(x,y,z)$, at $16$ subdivisions.}
\end{subfigure}

\begin{subfigure}{\textwidth}
	\centering
	\begin{minipage}{.5\textwidth}
		\centering
		\includegraphics[width=\linewidth]{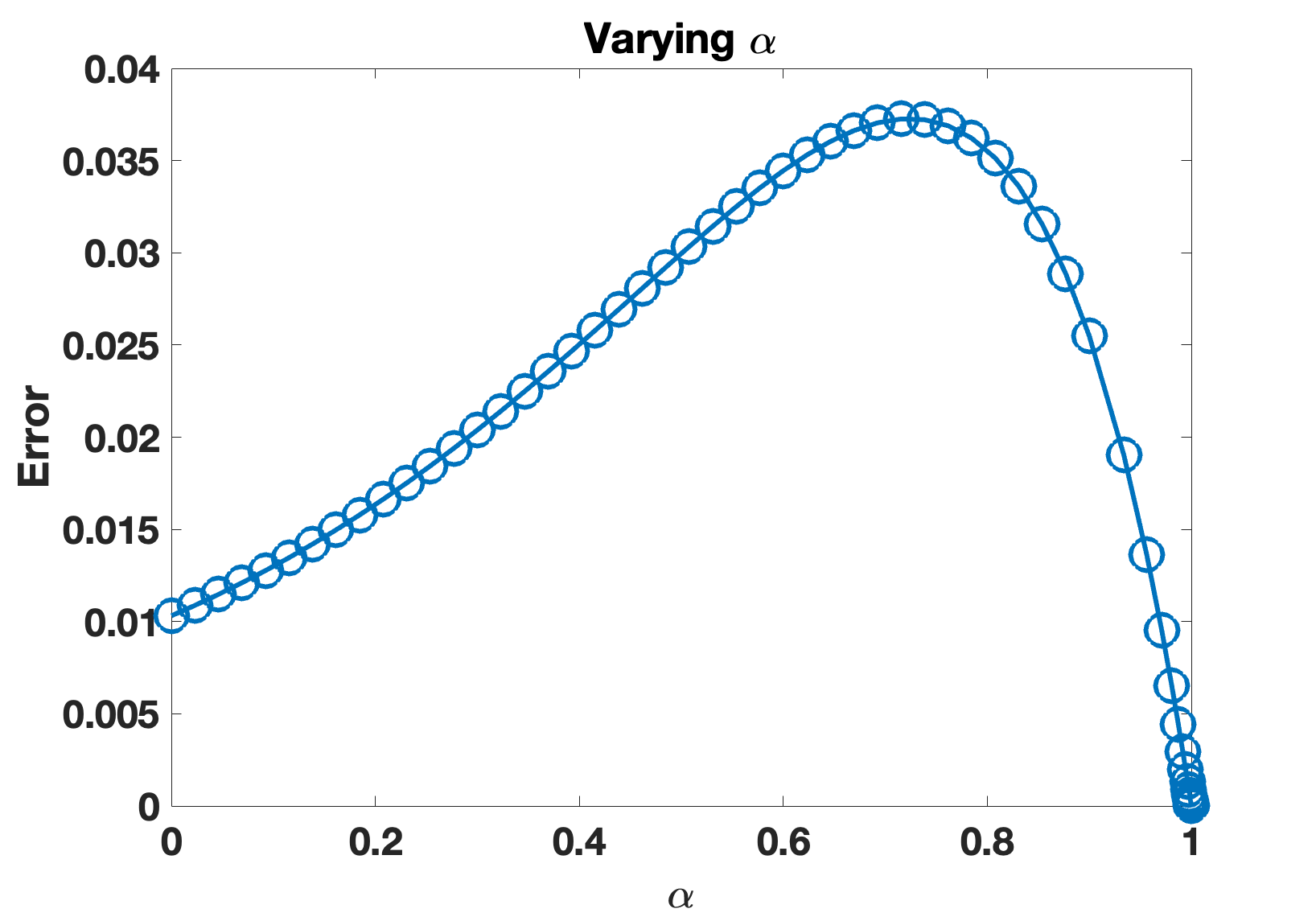}
	\end{minipage}%
	\begin{minipage}{.5\textwidth}
		\centering
		\includegraphics[width=\linewidth]{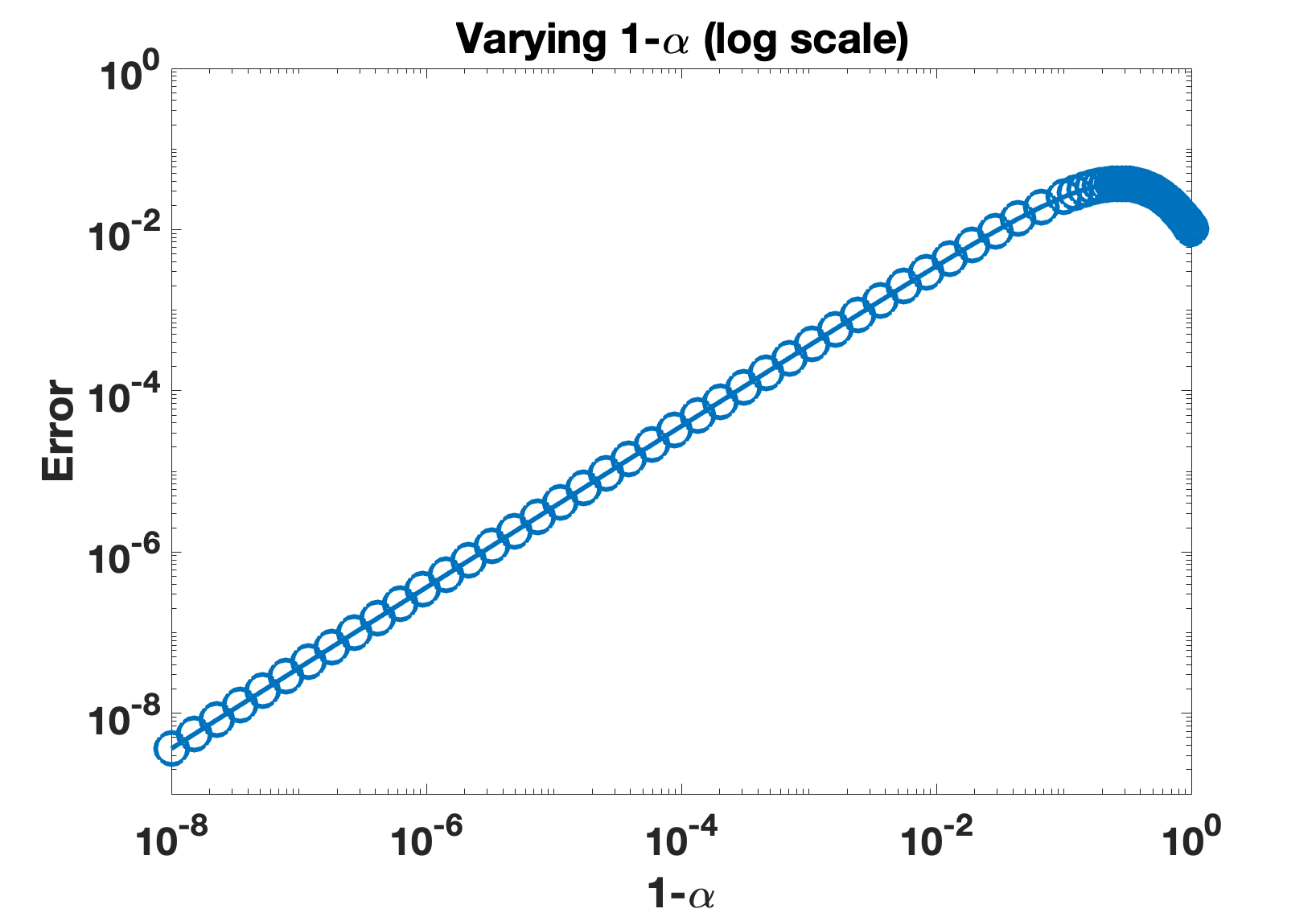}
	\end{minipage}
	\caption{Discrete fractional curl error for function $\bm{F}(x, y, z)$, at $16$ subdivisions.}
\end{subfigure}

\begin{subfigure}{\textwidth}
	\centering
	\begin{minipage}{.5\textwidth}
		\centering
		\includegraphics[width=\linewidth]{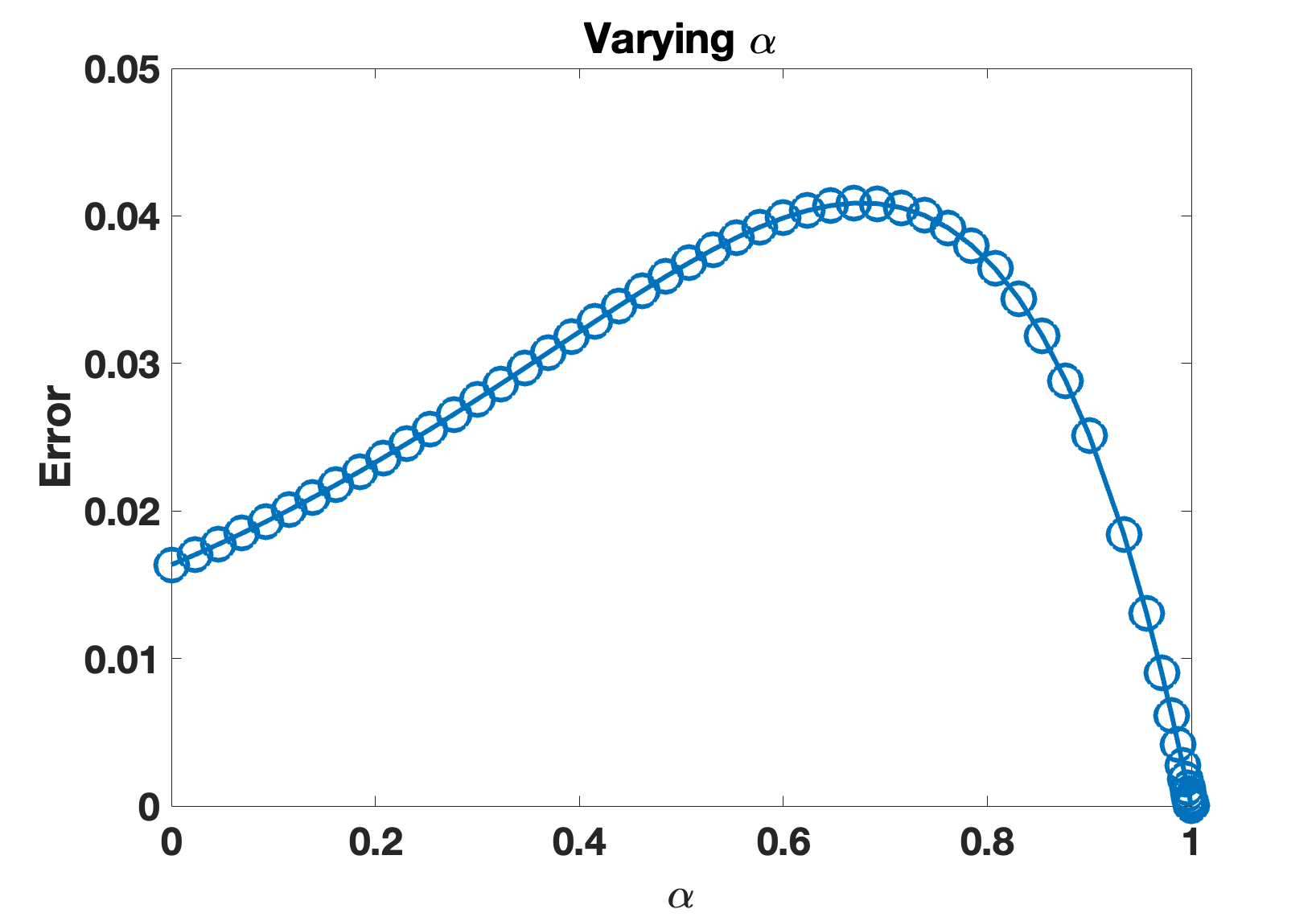}
	\end{minipage}%
	\begin{minipage}{.5\textwidth}
		\centering
		\includegraphics[width=\linewidth]{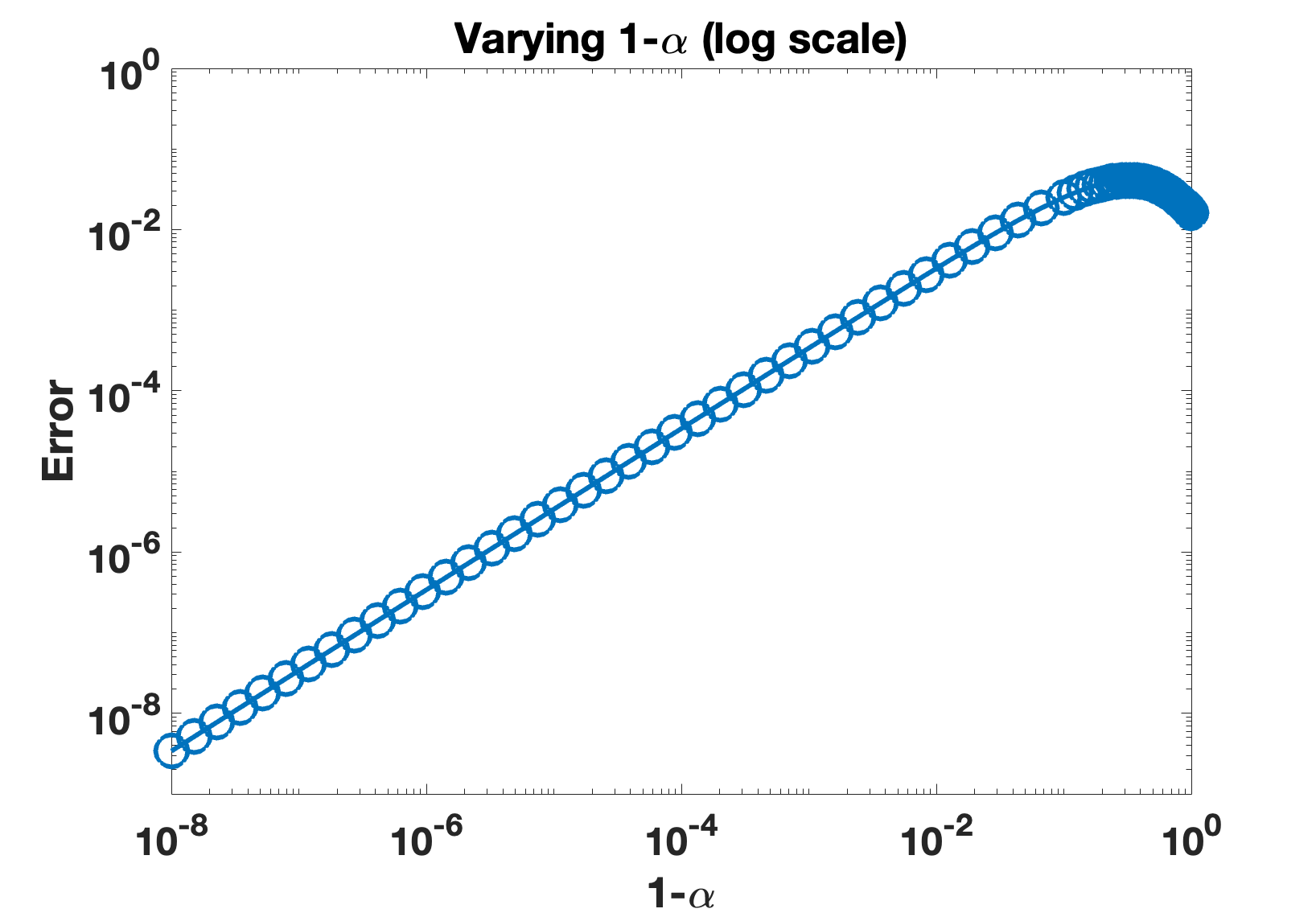}
	\end{minipage}
	\caption{Discrete fractional divergence error for function $\bm{F}(x, y, z)$, at $8$ subdivisions.}
\end{subfigure}

\caption{Error of discrete fractional gradient, curl, and divergence to their continuous counterparts against the fractional order $\alpha$. Left column: error versus $\alpha$ in linear scale. Right column: error versus $1-\alpha$ in log scale.} \label{fig:convergence_alpha}

\end{figure}

On the other hand, Figure~\ref{fig:convergence_alpha} plots the error (\ref{errorDpalpha}) for $p=0,1,2$ against $\alpha$ for a fixed number of subdivisions ($n=16$ for $p=0,1$ and $n=8$ for $p=2$). The plots in Figure~\ref{fig:convergence_alpha} deserve some explanation. On the left column plots, the error (in linear scale) is plotted against $\alpha$ (in linear scale), showing the relationship between the error and $\alpha$. One can see in these plots that the error approaches $0$ as $\alpha$ approaches $1$. This makes sense, since we have that $\mathbb{D}_p^\alpha \mathcal{R}_p = \mathcal{R}_{p+1} d_p^\alpha$ when $\alpha = 1$.  The plots on the right-hand sides of Figure~\ref{fig:convergence_alpha} more clearly show this phenomenon: the $x$-axis is $1-\alpha$, which is how far $\alpha$ is to $1$, and it is in log scale in order to clearly show values of $1-\alpha$ that are very close to $0$, i.e., values of $\alpha$ that are very close to $1$. The $y$-axis is the error, also in log scale, resulting in a clear linear relationship on the plot for $1-\alpha$ sufficiently small.

\subsection{Numerical verification of $\mathbb{D}_{p+1}^\alpha \mathbb{D}_p^\alpha = 0$} \label{sec:dd0}

\begin{figure}
	\centering
		\begin{minipage}{.5\textwidth}
			\centering
			\includegraphics[width=\linewidth]{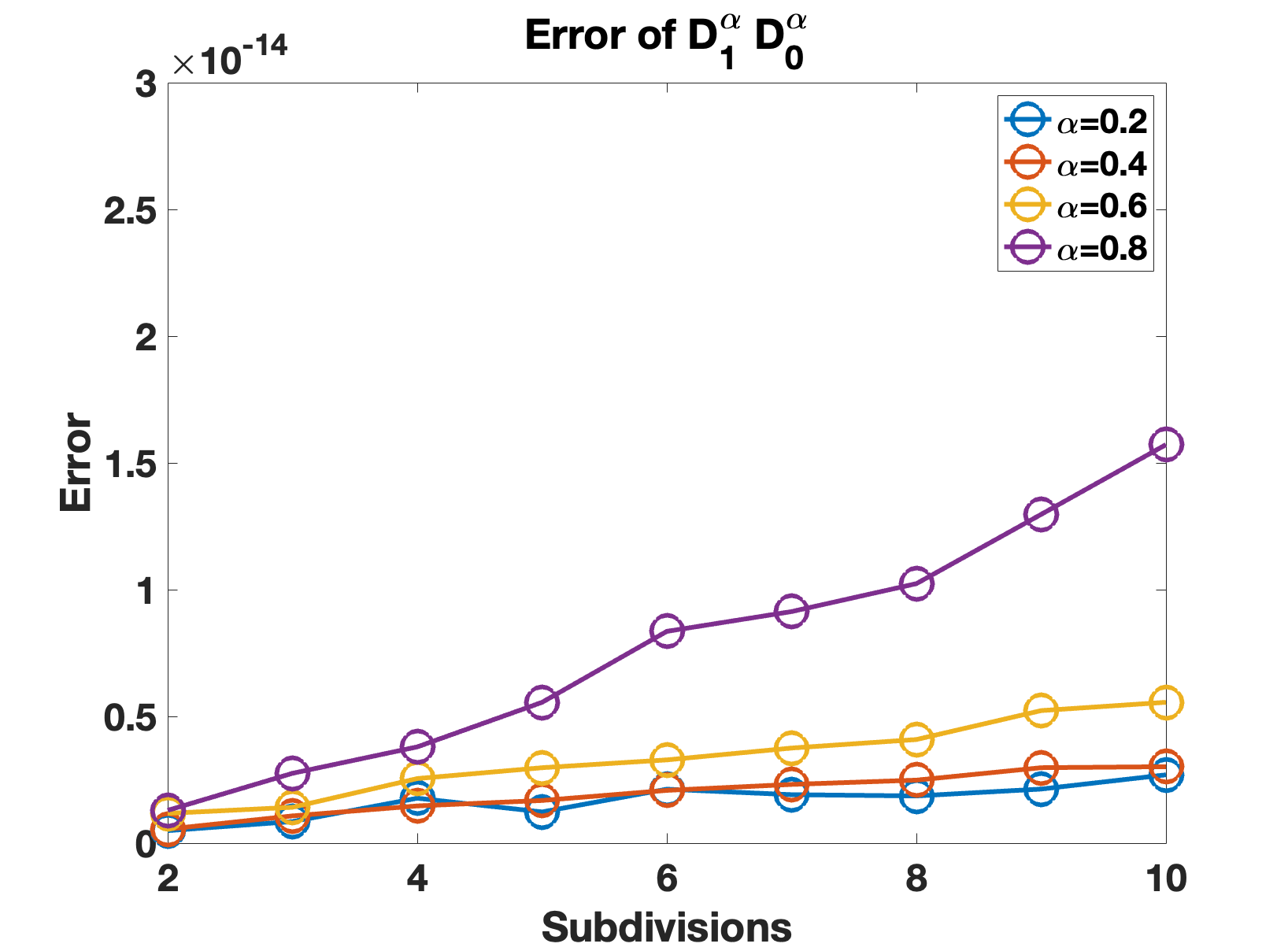}
		\end{minipage}%
		\begin{minipage}{.5\textwidth}
			\centering
			\includegraphics[width=\linewidth]{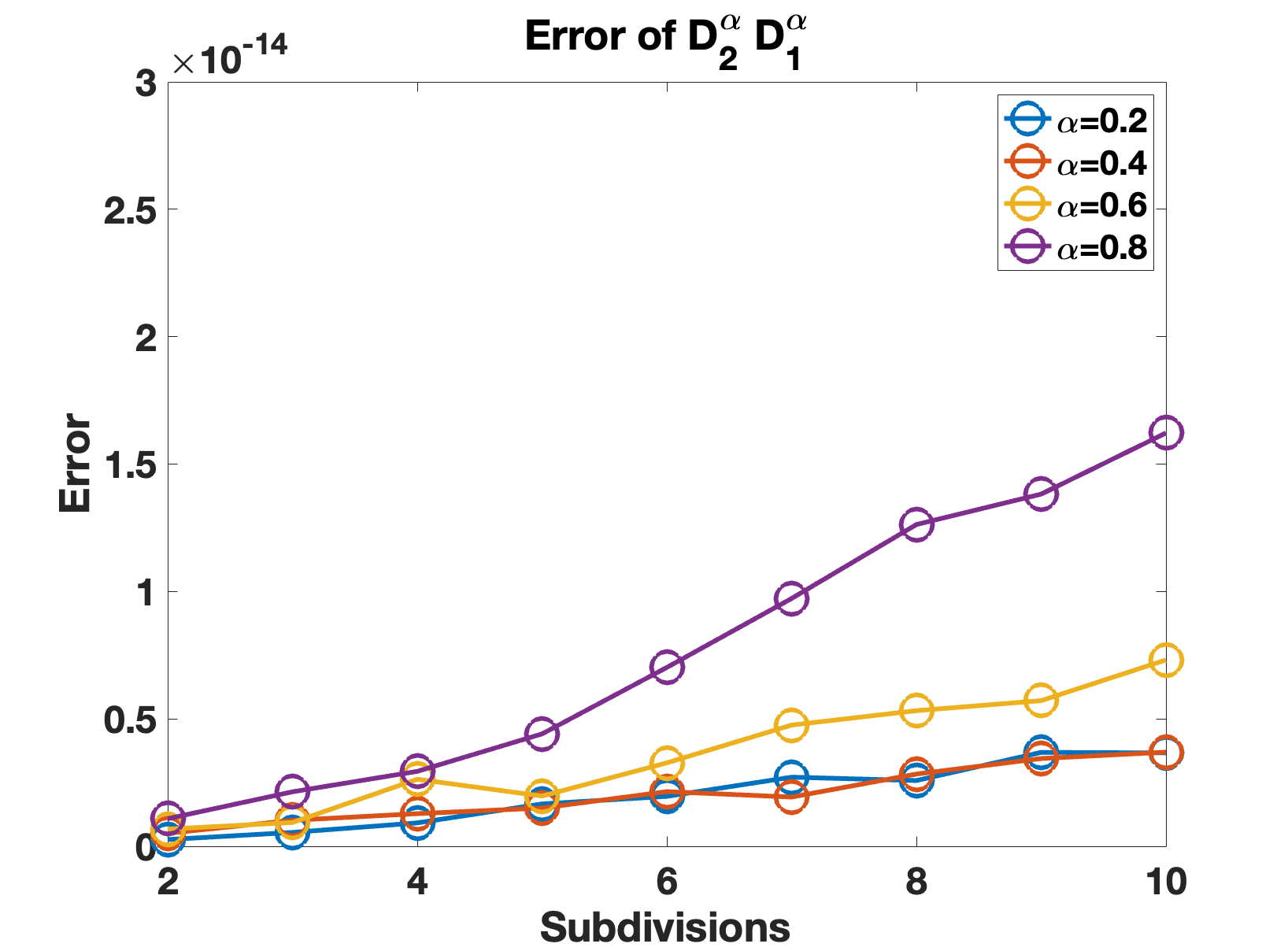}
		\end{minipage}	
	\caption{Numerical verification of $\mathbb{D}_{p+1}^\alpha \mathbb{D}_p^\alpha \textbf{c}^p = 0$} \label{fig:Dp1Dp}
	
\end{figure}

In this section, we numerically verify $\mathbb{D}_1^\alpha \mathbb{D}_0^\alpha = 0$ and $\mathbb{D}_2^\alpha \mathbb{D}_1^\alpha = 0$ by computing the errors in the same way as was done in Section~\ref{sec:convergence}. Concretely, the error is calculated as
\[
	\operatorname{RMS}(\mathbb{V}_{p+2}^{-1} \mathbb{D}_{p+1}^\alpha (\mathbb{D}_p^\alpha \mathcal{R}_p \omega^p))
\]
where we used the same scalar and vector field as in Section~\ref{sec:convergence} for $\omega^0$ and $\omega^1$.
Although the operators were mathematically defined such that $\mathbb{D}_{p+1}^\alpha \mathbb{D}_p^\alpha  = 0$ holds exactly, small numerical errors are expected in practice due to the floating point errors.  This is confirmed by Figure~\ref{fig:Dp1Dp}. As we can see, for different values of $\alpha$, the errors slightly increase when the number of subdivisions increases. However, the magnitude of the errors is of the order $10^{-14}$, which numerically verifies that $\mathbb{D}_{p+1}^\alpha \mathbb{D}_p^\alpha = 0$.

\section{Conclusion and future work}\label{sec:conclusion}

In this paper, a type of fractional vector calculus was discretized on a 3D regular cubical complex using discrete exterior calculus. To do this, Tarasov's standard basis directional FVC operators were re-formulated so that they could be discretized using DEC. Discretizing the reformulated FVC operators led to the FDEC operators $\mathbb{D}_p^\alpha = \dfraciint{1-\alpha}{p+1} \, \mathbb{D}_p \, (\dfraciint{1-\alpha}{p})^{-1}$, which are structure-preserving in the sense that $\mathbb{D}_{p+1}^\alpha \, \mathbb{D}_p^\alpha = 0$, just as $d^\alpha_{p+1} \, d^\alpha_p = 0$. In addition, our FDEC operators involve relatively sparse matrices and accurately approximate the corresponding continuous operators numerically with a second-order convergence rate in the RMS error.

For future work, firstly, we would like to generalize our FDEC operators beyond cubical complexes to arbitrary cell complexes or simplicial complexes. Secondly, besides T-FVC considered in this work, it is natural to consider other types of FVC (possibly more generally nonlocal vector calculus) and their structure-preserving discretizations. Finally, although our approach involves relatively sparse matrices, the sparsity does decrease with increasing $p$. Therefore, we plan to investigate whether it is possible to compute the $\mathbb{D}_p^\alpha$ matrices in a more memory-efficient manner for practical implementations.

\section*{Acknowledgments}
This work is partially supported by the National Science Foundation under grant DMS-2208267. 

\bibliographystyle{elsarticle-num}
\bibliography{paper}

\end{document}